\documentclass[12pt]{article}

\usepackage{url}
\usepackage{mathtools}
\usepackage{amssymb}
\usepackage{amsthm}
\usepackage{empheq}
\usepackage{latexsym}
\usepackage{enumitem}
\usepackage{eurosym}
\usepackage{dsfont}
\usepackage{appendix}
\usepackage{color} 
\usepackage[unicode]{hyperref}
\usepackage{frcursive}
\usepackage[utf8]{inputenc}
\usepackage[T1]{fontenc}
\usepackage{geometry}
\usepackage{multirow}
\usepackage[colorinlistoftodos]{todonotes}
\usepackage{lmodern}
\usepackage{anyfontsize}
\usepackage{stmaryrd}
\usepackage{natbib}
\usepackage{cleveref}

\usepackage{amsbsy}

\usepackage{cancel}

\setcitestyle{numbers,open={[},close={]}}

\definecolor{red}{rgb}{0.7,0.15,0.15}
\definecolor{green}{rgb}{0,0.5,0}
\definecolor{blue}{rgb}{0,0,0.7}
\hypersetup{colorlinks, linkcolor={red},citecolor={green}, urlcolor={blue}}
			
\makeatletter \@addtoreset{equation}{section}

\newtheorem{theorem}{Theorem}[section]
\newtheorem{assumption}[theorem]{Assumption}
\newtheorem{corollary}[theorem]{Corollary}

\newtheorem{lemma}[theorem]{Lemma}
\newtheorem{proposition}[theorem]{Proposition}

\def\no{\noindent}
\def\beq{\begin{eqnarray}}
\def\eeq{\end{eqnarray}}
\def\be*{\begin{eqnarray*}}
\def\ee*{\end{eqnarray*}}

\def \E{\mathbb{E}}

\def \N{\mathbb{N}}

\def \P{\mathbb{P}}
\def \Q{\mathbb{Q}}
\def \R{\mathbb{R}}

\def\Fc{{\cal F}}

\def\Hc{{\cal H}}

\def\Lc{{\cal L}}

\def\x{\times}

\def\0{\mathbf{0}}

\def\normeL2#1{\left\|{#1}\right\|_{L^2}}

\def \Frac{\displaystyle\frac}

\def \Liminf{\displaystyle\liminf}

\def \1{\mathds{1}}
\def \d{{\rm d}}
\def \i{{\rm i}}

\def \Re{\mathrm{Re}}

\usepackage{titletoc}

\dottedcontents{section}[1.3em]{}{1.3em}{1pc}
\dottedcontents{subsection}[2.8em]{}{2em}{1pc}

\def\restrict#1{\raise-.5ex\hbox{\ensuremath|}_{#1}}

\DeclareUnicodeCharacter{014D}{\=o}
 \title{Global Energy Solutions for Normalized Linear Stochastic Schr\"odinger Equations}

\author{
 Théo Hérouard\thanks{CMAP CNRS, Ecole polytechnique, I.P. Paris, 91128 Palaiseau,
        theo.herouard@polytechnique.edu.  }
    }
             \date{\today}

\begin{document}

\maketitle
 
\begin{abstract}
In this paper, we study nonlinear Schr\"odinger equations arising in the study of Markovian open quantum systems. These equations are derived from the normalization of linear stochastic Schr\"odinger equations. First, we prove the global well-posedness of the linear equation in the energy space, which is less regular than the usual space for this equation. Then, we show the indistinguishability of solutions to the nonlinear SPDE under a relaxation of the usual monotonicity assumption. Finally, by proving the pathwise uniqueness and normalizing the solutions to the linear equation, we are able to prove the well-posedness of solutions to the required class of nonlinear stochastic equations.

\tableofcontents
\end{abstract}
%\newpage
\vspace{3mm}
\no{\bf Keywords.} Stochastic partial differential equations, Open quantum systems, Existence and uniqueness of solutions

\vspace{3mm}
\no{\bf MSC2020.} 60H15, 35Q40, 60H30

\section{Introduction}

In this paper, we present a mathematical analysis of the nonlinear Schr\"odinger equation that describes open quantum systems under the Born-Markov approximation. More precisely, we study a stochastic equation on a separable Hilbert space $H$ of the type
\begin{equation}
    \label{eq:SNLSintro}
    \begin{aligned}
    \d \psi(t) &= -\i \Hc \psi(t) \d t + \sum_{k=1}^{\infty}\left(L_k \psi(t) -   \langle L_k \rangle_{\psi(t)} \psi(t)\right) \d W^k(t)\\
    &- \frac{1}{2}\sum_{k=1}^{\infty} \left(L_k^* L_k  - 2  \langle L_k \rangle_{\psi(t)} L_k  +  \langle L_k \rangle_{\psi(t)}^2 \right) \psi(t) \d t,
    \end{aligned}
\end{equation}
where $\Hc$ is an unbounded self-adjoint operator called the Hamiltonian, $(L_k)_{k\geq 1}$ are unbounded operators and $(W^k)_{k \geq 1}$ are independent Brownian motions. The term $\langle L_k \rangle_{\psi}$ is the average value of $L_k$ in state $\psi$ where the definition is given by \eqref{eq:defaverage}.

In quantum mechanics, a system with states in the (potentially infinite dimensional) Hilbert space $H$ evolves under the Hamiltonian $\Hc$ according to the Schr\"odinger equation. If one wants to perform a continuous measurement of the system, an external field in interaction with the system, where the measurement is done, needs to be introduced. Various approaches exist to study continuous measurement. For an in-depth description of such equations in a finite dimensional setting, we refer to \cite{barchielli2009quantum}. In one such approach, first developed by \cite{belavkin1992quantum}, the theory of classical stochastic equations was used to study quantum filtering (for the derivation from quantum probability, see for instance \cite{bouten2007introduction}). For continuous measurement, the trajectories of the system can also be formally obtained as a limit from a discrete time detection scheme \cite{wiseman1996quantum}. These trajectories are then described by a linear stochastic Schr\"odinger equations, which preserves the mass (i.e. the norm in $H$) only in expectation. Therefore, if one wants to obtain directly a post-measurement state, the equation needs to be normalized. The normalized state then evolves according to the nonlinear equation \eqref{eq:SNLSintro}. This is the reason why we decided to call such equations normalized linear stochastic Schr\"odinger equations instead of nonlinear Schr\"odinger equations since the nonlinearity does not come from the Hamiltonian part, as in the classical NLS or stochastic NLS (as in \cite{debouard2003snls} for instance).

Moreover, a classical model in open quantum systems is to consider a small quantum system, with state space $H$, governed by the Hamiltonian $\Hc$, in interaction with a reservoir. Then, under the Born-Markov approximation, the evolution of the density operators is given by the so-called Lindblad equation
\begin{equation}
    \label{eq:lindblad}
    \d \rho_t = - \i [\Hc, \rho_t] \d t + \sum_{k=1}^{\infty}\left( L_k \rho_t L_k^* - \Frac{1}{2}\{L_k^* L_k, \rho_t\} \right)\d t,
\end{equation}
where the family of operators $(L_k)_{k \geq 1}$ models interaction with the environment, $[\cdot, \cdot]$ is the commutator and $\{\cdot, \cdot\}$ the anti-commutator. An interesting property is that, if $\rho_0$ is a pure state, whose form is given by $(\psi_0, \cdot)_H \psi_0$, then for $t >0$ the evolution of the system $\rho_t$ is given by $\E [(\psi(t), \cdot)_H \psi(t)]$ where $\psi$ is a solution of \eqref{eq:SNLSintro} with $\psi(0) = \psi_0$. In particular, since equation \eqref{eq:SNLSintro} is suitable to do simulation, one can use a Monte Carlo approach to perform numerical simulations of the Lindblad equation (e.g \cite{le2015adaptive}). Moreover, the close connection between equations \eqref{eq:SNLSintro} and \eqref{eq:lindblad} can be used to derive regularity properties on the Lindblad equation \eqref{eq:lindblad} from the stochastic equation \eqref{eq:SNLSintro}, as in \cite{zbMATH06216094}.

These equations appear in a large number of physical situations. The most frequent (and most studied) framework is when the Hamiltonian is the harmonic oscillator and the coupling operators are polynomials of the creation and annihilation operators. The Gaussian dynamical semigroups \cite{zbMATH05163346}, the birth and death process \cite{gleyzes2007quantum}, the damped harmonic oscillator \cite{isar1993density, salama1993explicit} and the continuous position measurement \cite{jacobs2006straightforward} all fall within this framework. Another interesting model is that of quantum decoherence, in which the operator $L$ is given by $\sqrt{\gamma} H$ \cite{prezhdo1999mean}. Moreover, if instead for all integer $k$, $L_k$ is chosen equal to $-\i R_k$, with $R_k$ self-adjoint, then equation \eqref{eq:SNLSintro} is used to describe the random unitary evolution of a system. Indeed, if for simplicity there is only one operator $L$, equation \eqref{eq:SNLSintro} can be rewritten formally as
\begin{equation*}
    \i \d u = \left(\Hc + \Dot{W}(t)R \right) u \d t,
\end{equation*}
where $W$ is a Brownian motion, see for instance \cite{semina2014stochastic}.

When $(L_k)_{k \geq 1}$ are unbounded operators (which corresponds to a large proportion of situations), the well-posedness of "good" solutions to \eqref{eq:lindblad} is non-trivial and needs some technical assumptions. We will work with the framework of \cite{chebotarev1998sufficient} where the authors gave a non-explosion condition by introducing a reference operator $C$ (see also \cite{zbMATH00878015}). This operator allows simultaneous control of the Hamiltonian part $\Hc$ and the dissipative part $L^*L$ of the system. In the absence of interaction with the environment, a good reference operator is the Hamiltonian $\Hc$, and the natural space to build solution is then the energy space, i.e. $D(\sqrt{\Hc})$  the domain of the quadratic form associated to the operator $\Hc$.

Following the ideas coming from continuous measurement, we first study a linear stochastic Schr\"odinger equation with a correction which preserves the square norm in expectation:
\begin{equation*}
    \d u(t) = \left(- \i \Hc  - \Frac{1}{2}\sum_{k=1}^{\infty} L^*_k L_k \right)u(t) \d t + \sum_{k=1}^{\infty} L_k u(t) \d B^k(t),
\end{equation*}
From this expression, it is natural to define the effective Hamiltonian $G$ associated to this system by the following expression
\begin{equation}
    \label{eq:defG}
    G := -\i \Hc - \Frac{1}{2} \sum_{k=1}^{\infty} L_k^* L_k.
\end{equation}
Then, the renormalized process $(u(t)/ |u(t)|_H)_{t \geq 0}$ is a solution to \eqref{eq:SNLSintro} on another probability space. Using such approach, the well-posedness of strong solutions in the probabilistic sense (which means on the same probability space) of \eqref{eq:SNLSintro} were obtained only under some strong hypothesis. For instance, in \cite{gatarek1991continuous} such solutions were found for some specific, but physically interesting, examples. Strong solutions were also obtained in the case where $(L_k)_{k \geq 1}$ are bounded operators or $H$  is finite-dimensional since it implies that the non-linearity is locally Lipschitz. 

In a general setting, one of the main difficulty comes from the fact that if $L$ is unbounded. Hence, even for the (deterministic) Lindblad equation, the global well-posedness is unclear. To overcome this difficulty, various authors introduced a reference operator $C$ to control both the Hamiltonian and the dissipative parts. Moreover, for the stochastic equation \eqref{eq:SNLSintro}, the unboundedness of $L$ also implies that the non-linearity of the stochastic equation \eqref{eq:SNLSintro} is not locally-lipschitz in $H$.

In \cite{mora08nlss}, the authors proved the existence of weak solution in the probabilistic sense i.e. in a different probability space and uniqueness in law. However, the proof uses a Galerkin scheme which requires strong technical assumptions. In \cite{fagnola2013stochastic}, the authors succeeded in removing these assumptions by using another method of approaching the solution with Yosida approximations. In both articles, the strong existence was not proved since the pathwise uniqueness was missing. The constructed solution $\psi$ is almost surely in $L^2(0, T; D(C)) \cap C([0,T], H)$. This means that the supremum of the norm in $D(C)$ of $\psi$ may blow-up before reaching the final time $T$ if $C$ is unbounded. Hence, the sequence of stopping times defined by
\begin{equation*}
    \tau_n := \inf \{t >0, |C \psi(t)|_H > n\} \wedge T
\end{equation*}
for $n \geq 1$, may never converge to $T$ on a set of positive measure even if the following bound holds
\begin{equation*}
    \sup_{t \in [0,T]} \E |C \psi(t)|_H^2 < \infty.
\end{equation*}
Therefore, the usual truncation methods cannot be applied, which is mandatory for controlling the non-linear elements to prove the pathwise uniqueness. However, in some specific cases, one can prove that the solution lies in $C([0,T], D(C))$ almost surely. For instance it was done in \cite{gatarek1991continuous}, which allowed the authors to prove the pathwise uniqueness.

Moreover, in each of these examples, the solutions of \eqref{eq:SNLSintro} are strongly regular in the state space. Indeed, the various authors used only initial conditions in $D(C)$ for \eqref{eq:SNLSintro} and thus the solutions morally stayed in $D(C)$ at each time. In the deterministic framework, the usual initial conditions are taken in the energy space $D(C^{1/2})$, and then by using assumptions from \cite{chebotarev1998sufficient}, the solutions of \eqref{eq:lindblad} lie in $D(C^{1/2})$. This strong regularity can be noticed in the definition of $C$-regular solutions \cite[Definition 2]{mora08nlss} where projectors on $D(C)$ are apparent.

Therefore, the main objectives of this paper are first to construct solutions in the energy space $D(C^{1/2})$ for the linear equation and then for equation \eqref{eq:SNLSintro} by the usual renormalization method. This allows us to have assumptions on $C$ closer to those presented in \cite{chebotarev1998sufficient}. To do this, we prove new estimates, usual in the study of SPDE, for the equation \eqref{eq:SSE} which simplify the proof of many estimates. Then, the second and most difficult objective is to prove the pathwise uniqueness for solutions of \eqref{eq:SNLSintro}. The main difficulty is to control terms involving unbounded operators without having the continuity of the trajectories in the energy space. To get around this difficulty, we obtain a weakened monotonicity condition and a fine trajectory control, which allows us to prove pathwise uniqueness. From the uniqueness, the existence of strong solutions follows by an application of Yamada-Watanabe theorem and thus we obtain the global well-posedness. To the best of our knowledge, this paper is the first to derive energy solutions and the pathwise uniqueness of \eqref{eq:SNLSintro} with only technical assumptions in addition to the non-explosion conditions of \cite{chebotarev1998sufficient}.

\section{Preliminaries and main results}

Let $(H, \langle\cdot, \cdot\rangle_H)$ be a complex separable Hilbert space with its inner product. We consider $H$ as a real Hilbert space with the scalar product
\begin{equation*}
    (x,y)_H := \Re \langle x, y \rangle_H.
\end{equation*}
We denote the associated norm by $|\cdot|_H$. For $V$ a separable Hilbert space, with a dense embedding in $H$, we denote by $V^*$ the dual of $V$ relative to $H$. Their respective norms are denoted $|\cdot|_V$ and $|\cdot|_{V^*}$ and the duality pairing is denoted $\langle \cdot, \cdot \rangle_{V,V^*}$. We also denote by $l^2(H)$ the Hilbert space of square summable $H$-valued sequences with the following scalar product, for $u, v \in l^2(H)$
\begin{equation*}
    (u,v)_{l^2(H)} = \sum_{k=1}^{\infty}(u_k, v_k)_H,
\end{equation*}
and its associated norm $|\cdot|_{l^2(H)}$. 

For a positive operator $C : D(C) \subset H \to H$, we denote by $(\cdot, \cdot)_C$ the associated scalar product, for $x, y \in D(C)$:
\begin{equation*}
    (x,y)_C = (x,y)_H + (Cx, Cy)_H,
\end{equation*}
and $|\cdot|_C$ the norm associated with $C$. 

For a Banach space $(A, |\cdot|_A)$ and $t>0$, the space of continuous functions (resp. bounded measurable functions) on $[0,t]$ with values in $A$ is denoted:
\begin{equation*}
    C_t A := C([0,t], A) \text{ (resp. } L^{\infty}_t A := L^{\infty}(0,t;A) \text{)},
\end{equation*}
equipped with the norm:
\begin{equation*}
    |u|_{C_t A} = |u|_{L^{\infty}_t A} := \sup_{s \in [0,t]} |u(s)|_{A}.
\end{equation*}
Similarly, the space of square integrable functions on $(0,t)$ with values in $A$ is denoted:
\begin{equation*}
    L_t^2 A := L^2(0,t; A),
\end{equation*}
equipped with the norm:
\begin{equation*}
    |u|_{L^2_t A} := \left( \int_0^t|u(s)|_{A}^2\d s \right)^{1/2}.
\end{equation*}
For an operator $L : D(L) \subset H \to H$ and $X \in D(L)$, we define the following function
\begin{equation}
    \label{eq:defaverage}
    \langle L \rangle_X = (X, L X)_H.
\end{equation}
Note that, if $X$ has unit $H$-norm, then it is the average of $L$ in state $X$. Moreover, if $L$ is densely defined it can be decomposed, on the set $D(L) \cap D(L^*)$, as a sum of a symmetric operator $A$ and an anti-symmetric operator $B$, both depending on $L$. One possible choice for such operators is as follows
\begin{equation}
    \label{eq:defdecAB}
    A = \Frac{L + L^*}{2} \: \text{ and } \: B = \Frac{L - L^*}{2},
\end{equation}
defined on the adequate domains.

We consider the following stochastic nonlinear Schrodinger equation, arising in the context of quantum open systems
\begin{equation}
    \label{eq:SNLS}
    \begin{aligned}
    \d X(t) &= -\i \Hc X(t) \d t + \sum_{k=1}^{\infty}\left(L_k X(t) -   \langle L_k \rangle_{X(t)} X(t)\right) \d W^k(t)\\
    &- \frac{1}{2}\sum_{k=1}^{\infty} \left(L_k^* L_k  - 2  \langle L_k \rangle_{X(t)} L_k  +  \langle L_k \rangle_{X(t)}^2 \right) X(t) \d t.
    \end{aligned}
\end{equation}
where $\Hc :  D(\Hc) \subset H \to H$ is an unbounded self-adjoint operator, $(L_k)_{k \geq 1}$ a sequence of unbounded operators with domains $(D(L_k))_{k\geq1}$ dense subsets of $H$. The unknown function $X$ is a complex valued random field on a probability space $(\Omega, \Fc, \P)$ endowed with a standard filtration $(\Fc_t)_{t \geq 0}$. The sequence of stochastic processes $(W^k)_{k \geq 1}$ is a sequence of independent $\R$-valued $(\Fc_t)_{t\geq0}$-adapted Brownian motions on the stochastic basis $(\Omega, \Fc, (\Fc_t)_{t \geq 0}, \P)$.

Motivated by the usual proof to study equation \eqref{eq:SNLS} as explained in the introduction, we first study the following linear stochastic Schrodinger equation
\begin{equation}
    \label{eq:SSE}
    \d u(t) = \left(- \i \Hc u(t) - \Frac{1}{2}\sum_{k=1}^{\infty} L^*_k L_k u(t) \right) \d t + \sum_{k=1}^{\infty} L_k u(t) \d B^k(t),
\end{equation}
where $(B^k)_{j\geq1}$ is another sequence of independent $(\Fc_t)_{t \geq 0}$-adapted Brownian motions defined on the stochastic basis $(\Omega, \Fc, (\Fc_t)_{t \geq 0}, \P)$. We recall the definition of the effective Hamiltonian $G$ associated to this system by
\begin{equation*}
    G := -\i \Hc - \Frac{1}{2} \sum_{k=1}^{\infty} L_k^* L_k.
\end{equation*}
The domain of $G$ is defined as the elements $D(H)$ such that the operators $(L_k^*L_k)_{j \geq 1}$ is well-defined as a linear operator from $D(G)$ to $l^2(H)$. Then, similarly as in \cite{chebotarev1998sufficient} for the study of Lindblad equation \eqref{eq:lindblad}, we introduce a reference operator $C$ to control the effective Hamiltonian and the coupling operators. Therefore, we recall the key assumptions on the reference operator $C$ as stated in \cite{chebotarev1998sufficient}.
\begin{assumption}
\label{a:hyp1}
Assume that the domain $D(G)$ is dense. Moreover, there exists a linear operator $C: D(C) \subset H \rightarrow H$ is a densely defined, positive and self-adjoint operator such that
\begin{enumerate}
    \item $D(G) \subset D(C^{1/2})$ and $D(G)$ is a core for $C^{1/2}$.
    \item $D(C^{1/2}) \subset D(L_k)$ for all $k \geq 1$.
    \item There exists $K_C >0$ such that for all $x \in D(C^{3/2})$
    \begin{equation}
    \label{eq:condenergy}
        2 ( C^{1/2} x, C^{1/2} G x )_H + \sum_{k=1}^{\infty} |C^{1/2} L_k x|_H^2 \leq K_C | x |_{D(C^{1/2})}^2.
    \end{equation}
\end{enumerate}
\end{assumption}
Note that the condition \eqref{eq:condenergy} is similar to a control of the energy for the effective Hamiltonian $G$. Indeed, in \cite{chebotarev1998sufficient}, the authors proved that under Assumption \ref{a:hyp1} and if $G$ generates a contraction semigroup in $H$, then the operator $G$ is the infinitesimal generator of a strongly continuous semigroup on the Hilbert space $D(C^{1/2})$. However, to construct solutions to equation \eqref{eq:SSE} we need additional assumptions on the operator $C$. Our main set of working hypotheses is as follows.

\begin{assumption}
\label{a:hyp2}
The linear operator $C: D(C) \subset H \rightarrow H$ is such that
\begin{enumerate}
    \item $C$ satisfies Assumption \ref{a:hyp1}.
    \item There exists $\varepsilon >0$ such that :
    \begin{equation}
        C - \varepsilon I > 0.
    \end{equation}
    \item $G$ can be extended as a bounded operator from $D(C^{1/2})$ to its (topological) dual denoted $D(C^{1/2})^*$.
\end{enumerate}
\end{assumption}
Before stating the different results obtained in the present paper, we will first give some comments on the various conditions imposed on operator $C$ in Assumption \ref{a:hyp2}. Conditions $1$ is the natural one to be able to define a good reference operator since they are quite directly from \cite{chebotarev1998sufficient}. Assumption $2$ ensures that the operator $C$ is invertible. It is not a restrictive assumption since one can easily check that if $C$ is a positive operator satisfying Assumption \ref{a:hyp1}, then $C+I$ satisfies the first and second points of assumption \ref{a:hyp2}.

Condition $3$ is the main differences between Assumption \ref{a:hyp2} and the assumptions from \cite{chebotarev1998sufficient}. We explicitly demand the possibility to extend $G$ as a bounded operator from $D(C^{1/2})$ to $D(C^{1/2})^*$, whereas the condition in \cite{chebotarev1998sufficient} is that for all $u \in D(G)$ :
\begin{equation*}
    - 2 (u, G u)_H \leq |C^{1/2} u |^2_H.
\end{equation*}
However, such a condition is quite natural because it is imposed directly on the effective Hamiltonian. For example, it is imposed in \cite{chebotarev1998priori} to obtain a priori estimates for the solution of Lindblad equation \eqref{eq:lindblad}.

We can then state the global well-posedness of equation \eqref{eq:SSE}.
\begin{theorem}
\label{thm:exglobalsse}
Fix $T >0$, a stochastic basis $(\Omega, \Fc, (\Fc_t)_{t \geq 0}, \P)$ and a sequence of independent $(\Fc_t)_{t \geq 0}$- adapted Brownian motions $(B^k)_{k\geq1}$. Let $C : D(C) \subset H \to H$ be an operator satisfying Assumption \ref{a:hyp2} and let $u_0 \in L^2(\Omega, D(C^{1/2}))$ a $\Fc_0$-measurable random variable. Then, there exists a unique solution $u \in L^2(\Omega, C([0,T], H) \cap L^2([0,T], D(C^{1/2})))$ to \eqref{eq:SSE} such that $u(0) = u_0$. Moreover, for all $t \leq T$
\begin{equation}
    \E [|u(t)|_{H}^2] = \E [|u_0|^2_{H}],
\end{equation}
and there exists a constant $C(G, K_C, T) >0$ such that
\begin{equation}
    \label{eq:estimatemomentssolSSE}
    \E \left[|u|_{C_{T}H}^2 + |u|^2_{L^2_{T}V} \right] \leq C \E |u_0|^2_V.
\end{equation}
\end{theorem}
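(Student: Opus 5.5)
The plan is to view \eqref{eq:SSE} as a linear SPDE in the Gelfand triple $V := D(C^{1/2}) \subset H \subset V^*$ and to obtain existence by a Galerkin or Yosida-type approximation combined with an energy estimate at the level of $V$; uniqueness will come from the $H$-level Itô formula. By Assumption \ref{a:hyp2}(2), $|C^{1/2}x|_H$ is a norm on $V$ equivalent to $|\cdot|_V$. By Assumption \ref{a:hyp2}(3), $G \in \Lc(V,V^*)$. By Assumption \ref{a:hyp1}(2) and the closed graph theorem, each $L_k$ maps $V$ boundedly into $H$.

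\textbf{Step 1: the $H$-level identity.} First I would show that $L = (L_k)_{k}$ is bounded from $V$ to $l^2(H)$ and that
\begin{equation*}
2\langle Gx, x\rangle_{V^*,V} + \sum_{k} |L_k x|_H^2 = 0, \qquad x \in V.
\end{equation*}
For $x \in D(G)$ this is immediate from the definition of $G$, since $\Re \langle -\i \Hc x, x\rangle_H = 0$ and $(L_k^*L_k x, x)_H = |L_kx|_H^2$. The identity then extends to $V$ by density, because $D(G)$ is a core for $C^{1/2}$, and this extension simultaneously gives the bound $|Lx|_{l^2(H)}^2 \le 2|G|_{\Lc(V,V^*)}|x|_V^2$.

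\textbf{Step 2: approximation and a priori bounds.} I would regularize with $J_n := (I + C/n)^{-1}$ and solve
\begin{equation*}
\d u_n = J_n G J_n u_n \,\d t + \sum_k L_k J_n u_n \,\d B^k
\end{equation*}
(or a suitable variant), which is a linear SDE with bounded coefficients on $V$ and therefore has a unique solution. Itô's formula for $|u_n|_H^2$ should give, up to commutator errors, a martingale, which yields $\E|u(t)|_H^2 = \E|u_0|_H^2$ in the limit. For the $V$ bound, apply Itô's formula to $|C^{1/2}u_n|_H^2$. The drift is $2(C^{1/2}u, C^{1/2}Gu)_H + \sum_k |C^{1/2}L_ku|_H^2 \le K_C|u|_V^2$ by \eqref{eq:condenergy}, and Gronwall then gives $\sup_t \E|u_n(t)|_V^2 \le e^{K_CT}\E|u_0|_V^2$, hence $\E|u_n|^2_{L^2_TV} \le C\E|u_0|_V^2$ uniformly in $n$. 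The $C_TH$ part follows from the $H$-Itô formula together with the Burkholder--Davis--Gundy inequality, whose quadratic variation is controlled by $\int_0^t |Lu|_{l^2(H)}^2|u|_H^2 \,\d s$ via Step 1.

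\textbf{Main obstacle.} I expect the key difficulty to be justifying \eqref{eq:condenergy} for $J_nu$, since the estimate is only stated for $x \in D(C^{3/2})$. The commutation of $J_n$ with $G$ and $L_k$ is not available, so I would rather use Yosida regularization of the solution itself. Concretely, I would apply Itô's formula to $|C^{1/2}J_n u|_H^2$ for a candidate solution: $J_n u \in D(C^{3/2})$ whenever $u \in V$, and the error terms are controlled by the strong convergence $J_n \to I$ on $V$. To make this work I would first construct the solution with Galerkin projections onto eigenspaces of $C$, if $C$ has suitable spectral structure, or else via the semigroup generated by $G$ on $V$ (Chebotarev--Fagnola) in a mild formulation.

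\textbf{Step 3: passage to the limit and uniqueness.} Since the equation is linear, the approximations form a Cauchy sequence in $L^2(\Omega;C_TH)$. Indeed, the difference $u_n - u_m$ satisfies the $H$-level identity up to errors that vanish thanks to the uniform $L^2_TV$ bounds, so strong convergence can be obtained directly, without a compactness argument. The limit solves \eqref{eq:SSE} in $V^*$, and by Krylov's Itô formula in the triple it lies in $C_TH$ almost surely. The $V$-estimate \eqref{eq:estimatemomentssolSSE} then passes to the limit by lower semicontinuity. For uniqueness, take the difference $w$ of two solutions and apply the Itô formula in the triple. By Step 1, $\d|w|_H^2$ is a local martingale with zero drift and $w(0)=0$, so localization gives $w=0$.
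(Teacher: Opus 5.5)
Your Steps 1--2 coincide with the paper's argument. Your $J_n=(I+C/n)^{-1}$ is exactly the paper's $R_n=n(n+C)^{-1}$, your scheme is \eqref{eq:defgalerkin}, and your bounds are Lemma \ref{lem:estgalerkin}. Step 3, however, has a genuine gap: the claim that $(u_n)$ is Cauchy in $L^2(\Omega;C_TH)$ because the errors ``vanish thanks to the uniform $L^2_TV$ bounds''.

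Set $x=R_nu_n$, $y=R_mu_m$ and use the polarized form of Step 1, $\langle x,Gy\rangle_{V,V^*}+\langle y,Gx\rangle_{V,V^*}+(Lx,Ly)_{l^2(H)}=0$. Then the drift of $|u_n-u_m|_H^2$ is exactly
\begin{equation*}
2\langle (R_m-R_n)u_m,\,GR_nu_n\rangle_{V,V^*}+2\langle (R_n-R_m)u_n,\,GR_mu_m\rangle_{V,V^*}.
\end{equation*}
Since $GR_nu_n$ is only bounded in $L^2(\Omega;L^2_TV^*)$, you would need $(R_m-R_n)u_m\to0$ in $L^2(\Omega;L^2_TV)$. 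A uniform $V$-bound does not give this: $R_n\to I$ only strongly on $V$, and $\|R_m-R_n\|_{\Lc(V)}$ does not tend to $0$ when $C$ is unbounded. A $V$-bound yields smallness only in $H$, namely $|(I-R_n)z|_H\le(2\sqrt n)^{-1}|C^{1/2}z|_H$. That would suffice only if $GR_nu_n$ were bounded in $H$, i.e.\ with uniform bounds in $D(C)$. Neither $u_0\in V$ nor \eqref{eq:condenergy}, a $D(C^{1/2})$-level estimate, provides this. So the strong convergence underlying your passage to the limit is unsupported, and so is the identity $\E|u(t)|_H^2=\E|u_0|_H^2$ obtained ``in the limit'', since weak convergence only gives $\le$.

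The missing ingredient is the paper's compactness route. Extract weak(-star) limits of $u_n$, $R_nGR_nu_n$ and $(L_kR_nu_n)_k$ in $L^2(\Omega;L^\infty_TH\cap L^2_TV)$, $L^2(\Omega;L^2_TV^*)$ and $L^2(\Omega;L^2_Tl^2(H))$. Pass to the limit to get \eqref{eq:liminter}, get continuity from Proposition \ref{lem:itoformulanorm}, and then identify the limits. The paper identifies them by monotonicity and the Minty trick (Appendix \ref{app:limit}). For this linear equation you can even do it directly by duality:
\begin{equation*}
\E\int_0^T\langle\phi,R_nGR_nu_n\rangle_{V,V^*}\,\d s=\E\int_0^T\langle u_n,R_nG^*R_n\phi\rangle_{V,V^*}\,\d s\to\E\int_0^T\langle u,G^*\phi\rangle_{V,V^*}\,\d s,
\end{equation*}
as a strong-times-weak limit. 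The stochastic integral is a bounded linear, hence weakly continuous, map. Mean conservation must then be proved on the limit itself, by It\^o's formula plus BDG to show that $\sum_k\int_0^\cdot(u,L_ku)_H\,\d B^k$ is a true martingale (Proposition \ref{prop:meanpreserv}).

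Finally, the ``main obstacle'' you raise is not one, and your fallbacks are unnecessary. These were Galerkin projections onto eigenspaces of $C$, which presuppose a spectral structure that is not assumed, and a mild formulation with unbounded multiplicative noise. Since $J_n$ is symmetric and commutes with $C^{1/2}$, the drift of $|C^{1/2}u_n|_H^2$ is exactly $2(C^{1/2}x,C^{1/2}Gx)_H+\sum_k|C^{1/2}L_kx|_H^2$ with $x=J_nu_n\in D(C^{3/2})$. Hence \eqref{eq:condenergy} applies verbatim. Likewise the drift of $|u_n|_H^2$ is $2\langle x,Gx\rangle_{V,V^*}+|Lx|_{l^2(H)}^2=0$, with no commutator errors.
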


Using the terminology of \cite{mora08nlss}, we call the solution $u$ given by the previous theorem with a reference operator $C$ a strong $C^{1/2}$-solution of \eqref{eq:SSE}.

Then we are interested in the study of the normalized equation, i.e. we want to prove the well-posedness of \eqref{eq:SNLS}. First we will prove the existence of weak solutions in the probabilistic sense to \eqref{eq:SNLS}. With this in mind, the proof will be done in a similar manner as in \cite{mora08nlss} by renormalizing a strong $C^{1/2}$-solution of equation \eqref{eq:SSE}. Second, to prove the existence of strong solution, we will first prove the pathwise uniqueness for \eqref{eq:SNLS}. First, we will prove the following quite general result on indistinguishability of stochastic processes.
\begin{theorem}
\label{thm:uniqueness}
Let $(H, |\cdot|_{H})$ and $(V, |\cdot|_V)$ be two separable Hilbert spaces such that $V$ is a dense subspace of $H$. Let $G$ be a bounded linear operator from $V$ to $V^*$. Let $F$ and $J = (J_k)_{k \geq 1}$ be non-linear functions from $V$ to respec $H$ and $l^2(H)$. Assume that there exists two non-negative measurable functions $\sigma, \eta$ respectively from $V$ and $V\x V$ to $\R$, such that for all $X \in V$ 
\begin{equation}
    \label{eq:boundconduniqueness}
    |F(X)|_H + |J(X)|^2_{l^2(H)} \leq \sigma (X),
\end{equation}
and for all $X, Y \in V$
\begin{equation}
\label{eq:growthconduniq}
\begin{aligned}
2\langle X-Y, G X - G Y\rangle_{V, V^*} &+ 2 (X-Y,F(X) - F(Y))_{H} + |J(X) - J(Y)|_{l^2(H)}^2 \\
&\leq \eta(X,Y) |X-Y|_{H}^2.
\end{aligned}
\end{equation}
Assume that there exist two adapted processes $u$ and $v$, on the same stochastic basis with the same independent Brownian motions $(W^k)_{k \geq 0}$ associated to the previous basis, solutions of the equation
\begin{equation}
    \label{eq:SPDEunique}
    \d u(t) = \left( G u(t) + F(u(t)) \right) \d t + \sum_{k=1}^{\infty} J_k(u(t)) \d W^k(t),
\end{equation}
with the same initial condition. Moreover, assume that for almost surely all $\omega \in \Omega$
\begin{equation*}
    u(\cdot, \omega), v(\cdot, \omega) \in C([0,T], H) \cap L^2(0,T;V)
\end{equation*}
and
\begin{equation}
    \label{eq:finiteconduniqueness}
    \int_0^T \sigma(u(t,\omega)) + \sigma(v(t,\omega)) + \eta(u(t,\omega),v(t,\omega)) \d t < + \infty.
\end{equation}
Then, $u$ and $v$ are indistinguishable, i.e
\begin{equation*}
    \P\left(|u(t) - v(t)|_{C_TH} = 0 \right) = 1.
\end{equation*}
\end{theorem}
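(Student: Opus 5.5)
The plan is to apply Itô's formula to $|u(t)-v(t)|_H^2$ and remove the random, possibly non-integrable coefficient $\eta$ with a pathwise exponential weight, in the spirit of stochastic Gronwall arguments, combined with localisation by stopping times.

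Set $w = u - v$. The process $w$ solves
\[
\d w = \big(G w + F(u) - F(v)\big)\d t + \sum_{k} \big(J_k(u)-J_k(v)\big)\d W^k,
\qquad w(0)=0.
\]
By \eqref{eq:boundconduniqueness} and \eqref{eq:finiteconduniqueness}, the drift $F(u)-F(v)$ lies in $L^1(0,T;H)$ almost surely. The noise coefficient $J(u)-J(v)$ lies in $L^2(0,T;l^2(H))$ almost surely. Also $Gw\in L^2(0,T;V^*)$, because $w\in L^2(0,T;V)$ and $G$ is bounded. Hence the Itô formula for the square of the $H$-norm in the Gelfand triple $V\subset H\subset V^*$ (Krylov--Rozovskii, Pardoux) applies, in its version allowing an additional $L^1(0,T;H)$ drift term. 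This formula gives
\[
\d |w|_H^2 = \Big(2\langle w, Gw\rangle_{V,V^*} + 2(w, F(u)-F(v))_H + |J(u)-J(v)|_{l^2(H)}^2\Big)\d t + \d M_t,
\]
where $M_t = 2\sum_k\int_0^t (w, J_k(u)-J_k(v))_H\,\d W^k$ is a local martingale. Before using it, I would check that the version I cite covers adapted $L^1_tH$ drifts without moment assumptions. If not, I would localise first using the stopping times below.

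Next I introduce the continuous, adapted, finite process $\Lambda(t) = \int_0^t \eta(u(s),v(s))\,\d s$, which is finite by \eqref{eq:finiteconduniqueness}. Applying the product rule to $e^{-\Lambda(t)}|w(t)|_H^2$ and then \eqref{eq:growthconduniq} gives
\[
\d\big(e^{-\Lambda}|w|_H^2\big) \le e^{-\Lambda}\big(\eta|w|_H^2 - \eta |w|_H^2\big)\d t + e^{-\Lambda}\d M_t = e^{-\Lambda}\d M_t .
\]
To control the local martingale I take the stopping times
\[
\tau_n = \inf\Big\{t: |u(t)|_H+|v(t)|_H + \int_0^t \big(\sigma(u)+\sigma(v)\big)\d s \ge n\Big\}\wedge T .
\]
These satisfy $\tau_n\to T$ almost surely, by path continuity in $H$ and \eqref{eq:finiteconduniqueness}. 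On $[0,\tau_n]$ the quadratic variation of $\int e^{-\Lambda}\d M$ is bounded by
\[
4\int_0^{\tau_n} |w|_H^2\, |J(u)-J(v)|_{l^2(H)}^2\,\d s \le 8 n^2 \int_0^{\tau_n} \big(\sigma(u)+\sigma(v)\big)\d s \le 8n^3 .
\]
So the stopped stochastic integral is a true martingale with zero mean. Taking expectations gives $\E\big[e^{-\Lambda(t\wedge\tau_n)}|w(t\wedge\tau_n)|_H^2\big]\le 0$.

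Since $e^{-\Lambda}>0$ almost surely, $w(t\wedge\tau_n)=0$ almost surely for each $t$. Letting $n\to\infty$ gives $w(t)=0$ almost surely for each $t$. Taking rational $t$ and using continuity of the paths in $H$ then yields indistinguishability.

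The main obstacle is justifying the Itô formula under merely pathwise integrability, with no moment bounds on $F$, $J$ or $\eta$. The exponential weight is needed precisely because $\eta$ is only almost surely integrable, so a classical Gronwall argument in expectation is unavailable.
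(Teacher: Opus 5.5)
Your proof is correct and follows essentially the same route as the paper: It\^o's formula for $|u-v|_H^2$, the exponential weight $e^{-\int_0^t \eta(u,v)\,\d s}$, localisation by stopping times, and the vanishing mean of the stopped stochastic integral. The differences are minor. You apply the product rule to $e^{-\Lambda}|w|_H^2$ directly instead of going through the paper's pathwise Gronwall lemma (Lemma~\ref{lem:pathwisegronw}), and you leave $\eta$ out of the stopping times, using the almost sure positivity of $e^{-\Lambda}$ where the paper uses the lower bound $e^{-n}$.
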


In the previous theorem, the condition \eqref{eq:growthconduniq} on the nonlinear functions is similar to the monotonicity condition for monotone-coercive SPDE \cite{pardoux2021stochastic}. However, there is a relaxation of the assumptions with the apparition of the function $\eta$ on the right hand side. 

To apply such a bound to the equation \eqref{eq:SNLS} and obtain the required cancellations, we will separate $L$ into a symmetrical and antisymmetrical part. To do this, we will need the following technical assumption for a positive operator $C$.

\begin{assumption}
\label{a:hyp3}
For all $k \geq 1$,  $D(C^{1/2})$ is a subset of $D(L^*_k)$. Moreover, there exist a constant $C$ such that for all $x \in V$
\begin{equation}
    \sum_{k=1}^{\infty} |L^*_k x|^2_H \leq C |x|_{D(C^{1/2})}^2.
\end{equation}
\end{assumption}

We could weaken this hypothesis by replacing $V$ by a core of $C$. Nevertheless, to simplify the algebraic manipulations we take directly $V$. This assumption is satisfied for most examples of coupling operators $(L_k)_{k \geq 1}$. However, this assumption implies that the operators $(L_k)_{k \geq1}$ are closable, which remove some non-trivial examples, like the reflection at zero on the line (see \cite[Section 5.2.]{chebotarev1998sufficient}). Then, equipped with this theorem, and applying Yamada-Watanabe Theorem in infinite dimension \cite{fahim2025yamada, theewis2025yamada}, we obtain strong solutions (in the probabilistic sense) to the nonlinear equation \eqref{eq:SNLS}.

\begin{theorem}
\label{thm:exglobalSNLS}
Fix $T >0$, a stochastic basis $(\Omega, \Fc, (\Fc_t)_{t \geq 0}, \P)$ and a sequence of independent Brownian motions $(W^k)_{k \geq 1}$ adapted to the basis. Let $C : D(C) \subset H \to H$ be an operator satisfying Assumption \ref{a:hyp2}, assume Assumption \ref{a:hyp3} holds and let $X_0 \in L^2(\Omega, D(C^{1/2}))$ a $\Fc_0$-measurable random variable such that $|X_0|_{H} =1 $ almost surely. 

Then, there exists a unique solution $X \in L^2(\Omega, C([0,T], H) \cap L^2([0,T], D(C^{1/2})))$ to \eqref{eq:SNLS} such that $X(0) = X_0$. Moreover, $\P$-almost surely, for all $t \in [0,T]$
\begin{equation}
    |X(t)|_{H}^2 = 1.
\end{equation}
\end{theorem}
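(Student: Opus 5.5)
The plan has three stages: existence of a weak (probabilistic) solution by normalizing the linear equation, pathwise uniqueness through Theorem \ref{thm:uniqueness}, and strong well-posedness through the Yamada--Watanabe theorem in infinite dimension \cite{fahim2025yamada, theewis2025yamada}. Throughout, $V = D(C^{1/2})$.

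\textbf{Existence.} Let $u$ be the strong $C^{1/2}$-solution of \eqref{eq:SSE} given by Theorem \ref{thm:exglobalsse}, with $u(0)=X_0$.
\begin{enumerate}
\item Since $|X_0|_H=1$, Itô's formula applied to $|u(t)|_H^2$ (justified in the Gelfand triple $V\subset H\subset V^*$ using Assumption \ref{a:hyp2}(3)) gives
\begin{equation*}
\d |u(t)|_H^2 = 2\sum_k (u(t),L_k u(t))_H \,\d B^k(t).
\end{equation*}
Hence $M_t := |u(t)|_H^2$ is a nonnegative continuous martingale with $\E M_t = 1$.
\item Define the probability $\Q$ by $\d\Q = M_T\,\d\P$. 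Girsanov's theorem then shows that
\begin{equation*}
W^k(t) = B^k(t) - \int_0^t 2\langle L_k\rangle_{X(s)}\,\d s
\end{equation*}
are independent $\Q$-Brownian motions, where $X := u/|u|_H$. The set $\{t : u(t)=0\}$ is $\Q$-null, because $|u|_H^2$ cannot vanish $\Q$-a.s.
\item Itô's formula for $u\,(|u|_H^2)^{-1/2}$, with the drift rewritten in terms of $W^k$, yields \eqref{eq:SNLS}, exactly as in \cite{mora08nlss}. The normalization $|X(t)|_H=1$ holds by construction.
\item Regularity of $X$ follows from $\E_\Q\int_0^T|X|_V^2\,\d t = \E_\P\int_0^T |u|_V^2\, |u|_H^{-2}\, M_T\,\d t$. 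Since $M_T$ may be replaced by $M_t$ via the martingale property, this equals $\E_\P\int_0^T|u|_V^2\,\d t<\infty$ by \eqref{eq:estimatemomentssolSSE}.
\end{enumerate}
This gives a weak solution $X\in C([0,T],H)\cap L^2(0,T;V)$ almost surely.

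\textbf{Pathwise uniqueness.} Apply Theorem \ref{thm:uniqueness} with $G$ the effective Hamiltonian, $J_k(X) = L_kX - \langle L_k\rangle_X X$, and
\begin{equation*}
F(X) = \sum_k\left(\langle L_k\rangle_X L_kX - \tfrac12\langle L_k\rangle_X^2 X\right).
\end{equation*}
\begin{enumerate}
\item Since $\langle L_k\rangle_X = \langle A_k\rangle_X$ with $A_k$ the symmetric part from \eqref{eq:defdecAB}, Assumption \ref{a:hyp3} and Assumption \ref{a:hyp1}(2) give $\sum_k|L_kX|^2 + |L_k^*X|^2 \lesssim |X|_V^2$. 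It is convenient to work on the set $|X|_H\le 1$, i.e. to truncate $F,J$ outside the unit ball; this is harmless since solutions have norm one. With this, \eqref{eq:boundconduniqueness} holds with $\sigma(X) = c(1+|X|_V^2)$.
\item The key step is \eqref{eq:growthconduniq}. Expand $|J(X)-J(Y)|^2$. The quadratic part $\sum_k|L_k(X-Y)|^2$ cancels against $2\langle X-Y,G(X-Y)\rangle$ up to $2(X-Y,-\i\Hc(X-Y))=0$; one must check that this cancellation holds for the extension of $G$ to $V\to V^*$, via density of $D(G)$.
\item The remaining cross terms have the form $\langle L_k\rangle_X L_kX - \langle L_k\rangle_Y L_kY$ paired with $X-Y$. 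Write
\begin{equation*}
\langle L_k\rangle_X - \langle L_k\rangle_Y = (X-Y, A_kX) + (Y, A_k(X-Y)),
\end{equation*}
and move $A_k$ or $L_k^*$ onto the factor lying in $V$. This bounds each term by $|X-Y|_H^2$ times quadratic quantities in $|X|_V$ and $|Y|_V$, using Cauchy--Schwarz in $l^2$ and Assumption \ref{a:hyp3}.
\item The aim is $\eta(X,Y) = c(1+|X|_V^2+|Y|_V^2)$. With this choice, \eqref{eq:finiteconduniqueness} holds for any two solutions in $L^2(0,T;V)$ a.s.
\end{enumerate}
I expect this step to be the main obstacle. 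The danger is terms like $(X-Y,L_k(X-Y))\langle L_k\rangle_X$, which are not bounded by $|X-Y|_H^2$. These must be absorbed by completing the square with the corresponding terms of $|J(X)-J(Y)|^2$, using the symmetric/antisymmetric splitting: the antisymmetric part contributes zero to $(Z,B_kZ)$. What cannot be absorbed should be controlled by moving the operator to the other side with $L_k^*$, at the cost of a factor $|X|_V^2$ in $\eta$. I would check the algebra term by term, keeping $|X|_H=|Y|_H=1$.

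\textbf{Conclusion.} Theorem \ref{thm:uniqueness} gives pathwise uniqueness. Yamada--Watanabe then yields a unique strong solution on the given basis, and this solution inherits the unit norm and the $L^2(\Omega, C_TH\cap L^2_TV)$ integrability from the law of the weak solution.
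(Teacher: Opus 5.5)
Your three-stage plan is the paper's: normalize the strong $C^{1/2}$-solution of \eqref{eq:SSE} under $\d\Q=|u(T)|_H^2\,\d\P$ with the Girsanov-shifted $W^k$, get pathwise uniqueness from Theorem \ref{thm:uniqueness}, and conclude by Yamada--Watanabe. The difference is in how you verify \eqref{eq:growthconduniq}. There your proposal stops short: you identify the dangerous terms but leave the algebra ``to be checked''. Your route does close, and more directly than the paper's.

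Set $Z=X-Y$, $a_X=\langle L_k\rangle_X=\langle A_k\rangle_X$, $a_Y=\langle L_k\rangle_Y$ and $\delta_k=a_X-a_Y=(Z,A_k(X+Y))_H$. First cancel $\sum_k|L_kZ|_H^2$ against $2\langle Z,GZ\rangle_{V,V^*}$ by \eqref{eq:dissipGext}. Then write $a_XL_kX-a_YL_kY=a_XL_kZ+\delta_kL_kY$ and $a_XX-a_YY=a_XZ+\delta_kY$.
\begin{itemize}
\item The dangerous term $2a_X(Z,L_kZ)_H$ from $F$ cancels identically against $-2a_X(L_kZ,Z)_H$ from the cross term of $|J(X)-J(Y)|_{l^2(H)}^2$. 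No symmetric/antisymmetric splitting is needed for this.
\item The $\delta_k$-parts give $2\delta_k[(Z,L_kY)_H-(L_kZ,Y)_H]=4\delta_k(Z,B_kY)_H$.
\item The scalar terms $-(Z,a_X^2X-a_Y^2Y)_H+|a_XX-a_YY|_H^2$, which you did not mention, combine to $(X,Y)_H\delta_k^2$.
\end{itemize}
Hence the left-hand side of \eqref{eq:growthconduniq} equals exactly $\sum_k\big[4\delta_k(Z,B_kY)_H+(X,Y)_H\delta_k^2\big]$. This is the paper's $II+ii$, since $(Z,B_kY)_H=(X,B_kY)_H$. Assumption \ref{a:hyp3} enters only through $|B_kY|_H$ and $|A_k(X+Y)|_H$. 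Compared with the paper's regrouping into $I$--$IV$, this avoids splitting $G$ itself. It also avoids having to interpret the individual forms $\langle Z,A_k^2Z\rangle_{V,V^*}$ and $\langle Z,[A_k,B_k]Z\rangle_{V,V^*}$.

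You should drop the truncation of $F$ and $J$ outside the unit ball, because as stated it is unjustified. Uniqueness is claimed among all solutions with paths in $C([0,T];H)\cap L^2(0,T;V)$. So you would first need every such solution to stay on the unit sphere. This is true: Itô's formula gives $\d(1-|X|_H^2)=-2(1-|X|_H^2)\sum_k\langle L_k\rangle_X\,\d W^k$, a linear scalar equation started at $0$. In addition, Theorem \ref{thm:uniqueness} as stated requires \eqref{eq:growthconduniq} for the truncated coefficients at every pair $X,Y\in V$, including pairs on opposite sides of the cutoff. The paper sidesteps both points by keeping the $H$-norms in $\sigma(X)=K(1+|X|_H^4)|X|_V^2$ and $\eta(X,Y)=K(1+|X|_H^2+|Y|_H^2)(|X|_V^2+|Y|_V^2)$. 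These satisfy \eqref{eq:finiteconduniqueness} simply because $\sup_{t\le T}|X(t)|_H<\infty$ pathwise.
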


To conclude this section, we will give some examples of applications to certain physical situations. First, let us start with $\Hc: D(\Hc) \subset L^2(\R) \to L^2(\R)$, the Harmonic oscillator in $L^2(\R)$, i.e
\begin{equation*}
    \Hc := \Frac{1}{2}(-\partial_x^2 + |x|^2),
\end{equation*}
and $L$ is a linear combination of the creation and annihilation operators. More precisely, recall that these operators are defined respectively by
\begin{equation*}
    a := \Frac{1}{\sqrt{2}} (x - \partial_x) \text{ and } a^{\dagger} := \Frac{1}{\sqrt{2}}(x + \partial_x),
\end{equation*}
with domain given by $D(\sqrt{\Hc})$. Then, for $\mu$ and $\nu$ complex numbers, we take the coupling operator $L$ as
\begin{equation*}
    L := \mu a + \nu a^{\dagger}.
\end{equation*}
It is easy to verify that the operator $C := H$, using commutation relations, satisfies Assumptions \ref{a:hyp1}, \ref{a:hyp2} and \ref{a:hyp3}. Therefore, Theorem \ref{thm:exglobalSNLS} can be applied and the energy space $D(C^{1/2})$ in this case is the energy space of the harmonic oscillator. 

Another direct application is the case of decoherence. Let $\Hc$ be a self-adjoint operator bounded from below by $- \lambda$ and $L$ be given by $\sqrt{\gamma}\Hc$ where $\gamma$ is a positive constant. Then, the operator $C := (\Hc + (\lambda +1))^2$ satisfies Assumptions \ref{a:hyp1}, \ref{a:hyp2} and \ref{a:hyp3}, hence Theorem \ref{thm:exglobalSNLS} applies. However, note that in this case the energy space of $C$ is the domain of $\Hc$, since $L^*L$ is of order $\Hc^2$. This gives an easy example where the energy space of the reference operator (and thus of the effective Hamiltonian) is not that of the Hamiltonian.

The paper is organized as follows. In Section \ref{sec:proofSSE}, we will prove global well-posedness of \eqref{eq:SSE} in the energy space. For that purpose, we define a scheme using the Yosida approximation and we prove the convergence in the same probability space using monotonicity arguments. Then, the proof of Theorem \ref{thm:uniqueness} is done in Section \ref{sec:proofuniq}. The key ingredient of this proof is a pathwise Gronwall lemma for a class of stochastic processes. Finally, in Section \ref{sec:proofsnls} we first prove the pathwise uniqueness of \eqref{eq:SNLS} and then the existence of weak solutions in the probabilistic setting by the usual renormalization argument. The proof of Theorem \ref{thm:exglobalSNLS} then follows from an application of Yamada Watanabe theorem. In the Appendix we recall some tools from probability theory and we present some technical calculations used in the various proofs.

\textit{Convention:} Throughout the paper, when the reference operator $C$ is fixed, the letter $K$ will refer to various constants depending only on the constant $K_C$, from Assumption \ref{a:hyp1}.

\section{Proof of Theorem \ref{thm:exglobalsse}}
\label{sec:proofSSE}

In the whole paper, we are given a self-adjoint unbounded operator $H$ and a family of unbounded operators $(L_k)_{k \in \N}$, denoted $L$. Recall that we define the unbounded operator $G$ by \eqref{eq:defG}. Furthermore, in the whole Section \ref{sec:proofSSE}, we fix an unbounded reference operator  $C : D(C) \subset H \to H$ which satisfies Assumption \ref{a:hyp2}. To simplify the notation, in the following we denote by $V$ the set $D(C^{1/2})$ and $V^*$ its dual relative to $H$. We also still denote by $G$ its extension as a bounded operator from $V$ to $V^*$. Before starting the proof, we notice a few key properties.

\begin{proposition} Let $G$ be an unbounded operator with expression \eqref{eq:defG},  $C : D(C) \subset H \to H$ be an operator satisfying Assumption \ref{a:hyp2}. Then for all $x \in V$ 
    \begin{equation}
        \label{eq:dissipGext}
        2 \langle x, G x \rangle_{V,V^*} + \sum_{j=1}^{\infty} |L_j x|_H^2 = 0,
    \end{equation}
and there exists $K(G, C) > 0$ such that for all $x \in V$:
    \begin{equation}
        \label{eq:boundLVtoH}
        \sum_{k=1}^{\infty} |L_k x|_H^2 \leq K |x|_{V}^2.
    \end{equation}
\end{proposition}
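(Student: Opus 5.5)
The plan is to first establish both identities on the dense set $D(G)$, where every expression is classical, and then pass to all of $V$ by density. For $x \in D(G)$ we have $Gx \in H$, so the duality pairing reduces to the real inner product: $\langle x, Gx\rangle_{V,V^*} = (x, Gx)_H$. Using the definition \eqref{eq:defG},
\begin{equation*}
2 (x, Gx)_H = -2\Re \langle x, \i \Hc x\rangle_H - \sum_{k=1}^{\infty} \Re\langle x, L_k^* L_k x\rangle_H .
\end{equation*}
The first term vanishes because $\Hc$ is self-adjoint, so $\langle x, \Hc x\rangle_H$ is real. For the second, $x \in D(G)\subset D(C^{1/2}) \subset D(L_k)$ and $L_k x \in D(L_k^*)$, so $\langle x, L_k^*L_k x\rangle_H = |L_k x|_H^2$. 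This gives \eqref{eq:dissipGext} for $x \in D(G)$. Interchanging the sum and the inner product is legitimate because $(L_k^*L_k x)_k \in l^2(H)$ by the definition of $D(G)$, so the series converges in $H$.

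Next I would derive \eqref{eq:boundLVtoH} on $D(G)$. The identity just proved together with point 3 of Assumption \ref{a:hyp2} gives
\begin{equation*}
\sum_k |L_k x|_H^2 = -2\langle x, Gx\rangle_{V,V^*} \le 2\|G\|_{\mathcal L(V,V^*)} |x|_V^2 .
\end{equation*}
So \eqref{eq:boundLVtoH} holds on $D(G)$ with $K = 2\|G\|$.

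The extension to all of $V$ is the main obstacle, since it requires $D(G)$ to be dense in $V$ for the norm $|\cdot|_V$, which is the graph norm of $C^{1/2}$. This is exactly point 1 of Assumption \ref{a:hyp1}: $D(G)$ is a core for $C^{1/2}$. The graph norm $|x|_H + |C^{1/2}x|_H$ is equivalent to $|\cdot|_V$, using $C \ge \varepsilon$ if needed. Now take $x \in V$ and a sequence $x_n \in D(G)$ with $x_n \to x$ in $V$.

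For the bound, the map $x \mapsto (L_k x)_k$ is linear, and the estimate on $D(G)$ shows that $(L_k x_n)_k$ is Cauchy in $l^2(H)$. Its limit has to coincide with $(L_k x)_k$, which needs a closedness or identification argument. I would argue componentwise: for fixed $k$ and any $y \in D(L_k^*)$, which is dense,
\begin{equation*}
\langle L_k x_n, y\rangle_H = \langle x_n, L_k^* y\rangle_H \to \langle x, L_k^* y\rangle_H = \langle L_k x, y\rangle_H ,
\end{equation*}
so the weak limit of $L_k x_n$ is $L_k x$. Then Fatou's lemma, or weak lower semicontinuity of the $l^2(H)$ norm, gives \eqref{eq:boundLVtoH} on $V$. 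Since the limit of the Cauchy sequence equals $(L_k x)_k$, we in fact get strong convergence $(L_k x_n)_k \to (L_k x)_k$ in $l^2(H)$.

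For the identity, $\langle x_n, Gx_n\rangle_{V,V^*} \to \langle x, Gx\rangle_{V,V^*}$ by boundedness of $G$ from $V$ to $V^*$. Combined with the strong $l^2(H)$ convergence from the previous step, passing to the limit in \eqref{eq:dissipGext} for $x_n$ yields \eqref{eq:dissipGext} for every $x \in V$.
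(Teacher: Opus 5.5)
Your plan (verify both relations on $D(G)$, then extend to $V$ because $D(G)$ is a core for $C^{1/2}$) is the natural one; the paper states this proposition without proof. The computation on $D(G)$, the constant $K=2\|G\|_{\Lc(V,V^*)}$, the density step and the final limit passage are all fine.

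The gap is where you identify the $l^2(H)$-limit of $(L_kx_n)_k$ with $(L_kx)_k$. You use that $D(L_k^*)$ is dense in $H$. That is equivalent to $L_k$ being closable, and nothing in Assumption \ref{a:hyp2} gives it. The paper only obtains closability from Assumption \ref{a:hyp3}, via $V\subset D(L_k^*)$, and that assumption is not in force here.

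This is not a cosmetic omission: without closability the conclusion can fail. Take $H=\ell^2(\N_0)$, $\Hc=N$ the number operator, $C=N+1$, and a single operator $L_1=a^\dagger+\phi(\cdot)\,e_0$ with $D(L_1)=V=D(N^{1/2})$. Here $\phi$ is any nonzero linear functional on $V$ vanishing on the dense proper subspace $D(N)$; such a $\phi$ is automatically unbounded for $|\cdot|_V$, since its kernel is dense. Then:
\begin{itemize}
\item Since $\mathrm{Ran}(a^\dagger)\perp e_0$, one finds $D(L_1^*)=V\cap\{e_0\}^{\perp}$ with $L_1^*=a$ there.
\item Hence $L_1^*L_1=aa^\dagger=N+1$ on $D(N)$, so $G=-\i N-\frac12(N+1)$, exactly as for $L_1=a^\dagger$.
\item Since also $L_1=a^\dagger$ on $D(C^{3/2})\subset D(N)$, Assumptions \ref{a:hyp1}--\ref{a:hyp2} hold (with $K_C=1$).
\item Yet $|L_1x|_H^2=|a^\dagger x|_H^2+|\phi(x)|^2$ is unbounded on the unit ball of $V$. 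So \eqref{eq:boundLVtoH} fails, and because $|\langle x,Gx\rangle_{V,V^*}|\le\|G\|\,|x|_V^2$, so does \eqref{eq:dissipGext}.
\end{itemize}
You therefore need an extra hypothesis: either closability of each $L_k$ (automatic under Assumption \ref{a:hyp3}), or the convention that $L_k$ on $V$ is the continuous extension of $L_k|_{D(G)}$. With that in place your argument goes through as written.

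A smaller slip: $(L_k^*L_kx)_k\in l^2(H)$ does not make $\sum_kL_k^*L_kx$ converge in $H$; think of $L_k^*L_kx=e/k$ for a fixed vector $e$. What you need is that the series defining $Gx$ converges in $H$, which is part of what $x\in D(G)$ must mean for $Gx\in H$ to make sense. Then $\Re\langle x,\sum_kL_k^*L_kx\rangle_H=\sum_k|L_kx|_H^2$ follows from continuity of the inner product.
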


We also fix a stochastic basis $(\Omega, \Fc, (\Fc_t)_{t \geq 0}, \P)$ and a sequence of independent $(\Fc_t)_{t \geq 0}$-adapted Brownian motions $(B^k)_{k\geq 1}$. We fix $u_0 \in L^2(\Omega, V)$ a $\Fc_0$-measurable random variable.

The proof of existence is divided in two steps. The first one consists in defining the approximation scheme using Yosida approximation, as in \cite{fagnola2013stochastic} and show some new and useful estimates in a similar way as in \cite{debussche2011local}. Then, in the second part, these estimates are used to pass to the limit in the same probability space, and identification is performed as in \cite{pardoux2021stochastic}. In this second part, we use the method from the variational approach to SPDEs, as developed in \cite{pardouxt1980stochastic, zbMATH05207289}

\subsection{The Approximation scheme}

First, we define the Yosida approximation of $-C$. To this end, for $n \in \N$ we define $R_n$ by
\begin{equation}
    \label{eq:defyosida}
    R_n := n (n + C)^{-1}.
\end{equation}
From this definition, using that $C$ satisfies Assumption \ref{a:hyp2}, one can easily show that for all $n$ integer $R_n$ is a bounded symmetric operator from $V^*$ to $V$ and commutes with $C$ and $C^{1/2}$. Moreover it holds that $\| R_n\|_{\Lc(H)} \leq 1$ and therefore $\| R_n\|_{\Lc(V)} \leq 1$. Next, we define the regularisation of $G$ and $(L_k)_{k \in \N}$ used in the approximation sequence similarly as in \cite{fagnola2013stochastic}. For $n \geq 1$, we define
\begin{equation}
    \label{eq:defGnLn}
    \begin{aligned}
    &G^n := R_n G R_n, \\
    &\text{for all $k \leq n$ } L_k^n := L_k R_n \text{ and otherwise } L_k^n = 0.
    \end{aligned}
\end{equation}

We will denote by $L^n$ the sequence of operators $(L_k^n)_{k \leq n}$. From this definition, third condition of Assumption \ref{a:hyp2} and \eqref{eq:condenergy}, we easily obtain the following lemma as in \cite[Section 5.2.]{fagnola2013stochastic}.
\begin{lemma}
\label{lem:bddGnLn}
For all $n$ integer
\begin{eqnarray}
G^n \in \Lc(V^*, V) \label{eq:bddGn},\\
L^n \in \Lc(V, l^2(V)) \label{eq:bddLn}.
\end{eqnarray}
\end{lemma}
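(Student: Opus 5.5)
The plan is to reduce both claims to mapping properties of the resolvent $R_n = n(n+C)^{-1}$, combined with the extension of $G$ from Assumption \ref{a:hyp2} and the bound \eqref{eq:boundLVtoH}.

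\textbf{Smoothing of $R_n$.} Since $C$ is self-adjoint with $C \geq \varepsilon I$, functional calculus gives that $C^{1/2}R_n = n C^{1/2}(n+C)^{-1}$ is bounded on $H$, with norm at most $\sup_{\lambda\geq \varepsilon} n\lambda^{1/2}/(n+\lambda) \leq \sqrt{n}/2$. Hence $R_n \in \Lc(H,V)$. By duality, the $H$-adjoint of this map, still $R_n$ since $R_n$ is symmetric, lies in $\Lc(V^*,H)$. More precisely, for $f\in V^*$ I define $R_n f$ through $(R_n f, h)_H = \langle R_n h, f\rangle_{V,V^*}$ for $h \in H$; this is consistent with $R_n$ on $H$. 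Composing, $R_n \in \Lc(V^*,H)\circ$ followed by $\Lc(H,V)$ shows that $R_n^2$ maps $V^*$ into $V$. A direct argument is cleaner: $C^{1/2}R_n C^{1/2} = nC(n+C)^{-1}$ is bounded on $H$, with norm at most $n$, and $C^{-1/2}$ identifies $V^*$ isometrically (up to equivalent norms) with $H$. So $R_n$ alone is already in $\Lc(V^*,V)$, with $\|R_n\|_{\Lc(V^*,V)} \lesssim n$. The norms $|C^{1/2}x|_H$ and $|x|_V$ are equivalent because $C\geq\varepsilon I$, and this is what justifies that identification.

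\textbf{Bound for $G^n$.} Write $G^n = R_n G R_n$ as the composition $V^* \xrightarrow{R_n} V \xrightarrow{G} V^* \xrightarrow{R_n} V$. Each map is bounded: the two outer ones by the previous step, and the middle one by the third point of Assumption \ref{a:hyp2}. This proves \eqref{eq:bddGn}, with norm of order $n^2\|G\|_{\Lc(V,V^*)}$.

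\textbf{Bound for $L^n$.} For $x\in V$ we have $R_n x\in V$ and $|R_n x|_V \leq |x|_V$. The main point is to control $C^{1/2}L_k R_n x$ in $H$, i.e. to show $L_kR_nx\in V$. Since $R_n x \in V \subset D(L_k)$, \eqref{eq:boundLVtoH} gives $\sum_k |L_k R_n x|_H^2 \leq K|x|_V^2$, but this is only an $H$-bound. For the $V$-norm I use \eqref{eq:condenergy}. Note that $R_n$ maps $H$ into $D(C)$ and maps $V$ into $D(C^{3/2})$. Set $y = R_n x \in D(C^{3/2})$. Then
\[
\sum_{k} |C^{1/2}L_k y|_H^2 \leq K_C|y|_V^2 - 2(C^{1/2}y, C^{1/2}Gy)_H .
\]
The last term is handled by duality: $(C^{1/2}y, C^{1/2}Gy)_H = \langle C y, G y\rangle_{V,V^*}$ up to the identification above. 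It is then bounded by $\|G\|\,|Cy|_V\,|y|_V$, and $|Cy|_V = |CR_n x|_V \leq n|x|_V$. This gives $\sum_{k\le n}|L^n_k x|_V^2 \leq K(1+n)|x|_V^2$, which is \eqref{eq:bddLn}.

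\textbf{Expected obstacle.} I expect the justification of this duality step to be the hard part. Inequality \eqref{eq:condenergy} is a priori only stated with $Gy$ in $H$ (i.e. $y\in D(G)$), so one has to interpret $(C^{1/2}y,C^{1/2}Gy)$ as the pairing $\langle Cy, Gy\rangle$ with $Cy\in V$ and $Gy\in V^*$. If needed, I would approximate $y$ using the core property of $D(G)$ (first point of Assumption \ref{a:hyp1}) together with the lower semicontinuity of the left-hand sum, so as to pass to the limit in the inequality.
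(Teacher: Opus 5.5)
Your proof is correct and follows the route the paper indicates (the paper only says the lemma follows from the definition of $R_n$, the third point of Assumption~\ref{a:hyp2} and \eqref{eq:condenergy}, as in Fagnola--Mora): functional calculus gives $R_n\in\Lc(V^*,V)$, hence \eqref{eq:bddGn} by composition, and \eqref{eq:condenergy} applied to $y=R_nx\in D(C^{3/2})$, with the middle term read as $\langle Cy,Gy\rangle_{V,V^*}$, gives \eqref{eq:bddLn}. That reading is legitimate because \eqref{eq:condenergy} is assumed on all of $D(C^{3/2})$, and it is the one the paper implicitly uses in the proof of Lemma~\ref{lem:estgalerkin}, so you do not need your fallback approximation --- which is just as well, since the core property only approximates in $V$, not in the $D(C^{3/2})$-norm you would need to pass to the limit in $\langle Cy_m,Gy_m\rangle_{V,V^*}$.
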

We are now equipped to define the approximation sequence. For $n \geq 1$
\begin{equation}
    \label{eq:defgalerkin}
    \begin{cases}
     \d u_n & =  G^n u_n \d t + \sum_{k=1}^n L_k^n u_n \d B^k(t) \\
     u_n(0) & = u_0
    \end{cases}
\end{equation}
Then, by Lemma \eqref{lem:bddGnLn} and elementary results about the solutions of SDE in a Hilbert space \cite[Section 6.1.1]{da2014stochastic} with bounded operators, there exists a unique solution $u_n$ in $L^2(\Omega; C([0,T], V))$, for all $n$ integer.

\begin{lemma}
\label{lem:estgalerkin}
Let $(u_n)_{n \in \N}$ be the associated sequence of solution of the Galerkin scheme defined by \eqref{eq:defgalerkin}. Then, there exist positive and finite constants $C_1(K_C, T)$, $C_2(K_C, T)$ independent of $n$, such that
\begin{eqnarray}
\sup_{0 \leq t \leq T} \E [|u_n(t)|_H^2] &\leq& \E[|u_0|_H^2], \label{eq:galerkinbdd1}\\
\sup_{0 \leq t \leq T}\E[|u_n(t)|_V^2] &\leq& C_1 \E [|u_0|_V^2], \label{eq:galerkinbdd2}\\
\E[|u_n(t)|_{C_{T} H}^2] &\leq& C_2 \E [|u_0|_V^2]. \label{eq:galerkinbdd3}
\end{eqnarray}
\end{lemma}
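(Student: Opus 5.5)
We apply Itô's formula to the approximate solutions $u_n$ of \eqref{eq:defgalerkin}. Since $G^n$ and $L^n$ are bounded (Lemma \ref{lem:bddGnLn}) and $u_n\in L^2(\Omega;C([0,T],V))$, every stochastic integral below is a true martingale and the $n$-dependent constants never enter the final bounds. The three estimates are proved in order: first the $H$-norm, then the $V$-norm, and finally the supremum in time by means of the Burkholder--Davis--Gundy inequality.

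\emph{Bound \eqref{eq:galerkinbdd1}.} Itô's formula for $|u_n|_H^2$ gives
\begin{equation*}
\d |u_n|_H^2 = \Big(2( u_n, G^n u_n)_H + \sum_{k\le n}|L_k^n u_n|_H^2\Big)\d t + 2\sum_{k\le n}(u_n, L_k^n u_n)_H\, \d B^k.
\end{equation*}
Since $R_n$ is symmetric, $(u_n, R_nGR_n u_n)_H=\langle R_nu_n, GR_nu_n\rangle_{V,V^*}$. Applying \eqref{eq:dissipGext} to $x=R_nu_n\in V$, the drift equals $-\sum_{k>n}|L_kR_nu_n|_H^2\le 0$. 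Taking expectations removes the martingale term, and the drift is nonpositive, so \eqref{eq:galerkinbdd1} follows.

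\emph{Bound \eqref{eq:galerkinbdd2}.} We apply Itô's formula to $|C^{1/2}u_n|_H^2$, which is legitimate because $u_n$ is $V$-valued and the coefficients map $V$ into $V$. The drift is
\begin{equation*}
2(C^{1/2}u_n, C^{1/2}R_nGR_nu_n)_H+\sum_{k\le n}|C^{1/2}L_kR_nu_n|_H^2 .
\end{equation*}
Set $x=R_nu_n$. Because $R_n$ commutes with $C^{1/2}$ and maps $V^*$ into $V$, we have $x\in D(C^{3/2})$ whenever $u_n\in V$. Using the symmetry of $R_n$, the first term becomes $2(C^{1/2}x, C^{1/2}Gx)_H$, and the whole drift is then bounded by $K_C|x|_V^2\le K_C|u_n|_V^2$ by \eqref{eq:condenergy} and $\|R_n\|_{\Lc(V)}\le1$. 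This is the step we expect to be the main obstacle. It requires checking that $Gx$ really lies in the right space for the pairing, or else working with the extension of $G$ and justifying the commutation $C^{1/2}R_nGR_n = R_n C^{1/2} G R_n$ in duality. It also requires noticing that truncating the sum at $k\le n$ only decreases it. If this fails, we would pass through a core, using point 1 of Assumption \ref{a:hyp1}. Once the drift bound is in hand, we take expectations and add \eqref{eq:galerkinbdd1}, which gives $\E|u_n(t)|_V^2\le \E|u_0|_V^2+K_C\int_0^t\E|u_n|_V^2\,\d s$. Gronwall's lemma then yields $C_1=e^{K_CT}$.

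\emph{Bound \eqref{eq:galerkinbdd3}.} We return to the $H$-Itô formula. The drift is nonpositive, so
\begin{equation*}
\sup_{t\le T}|u_n(t)|_H^2\le |u_0|_H^2+2\sup_{t\le T}\Big|\int_0^t\sum_k(u_n,L_k^nu_n)_H\,\d B^k\Big|.
\end{equation*}
Burkholder--Davis--Gundy bounds the expectation of the last term by $K\,\E\big(\int_0^T\sum_k (u_n,L_kR_nu_n)_H^2\,\d s\big)^{1/2}$. By Cauchy--Schwarz this is at most $K\,\E\big[\sup_t|u_n|_H\,(\int_0^T\sum_k|L_kR_nu_n|_H^2\,\d s)^{1/2}\big]$. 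Young's inequality splits this into $\frac12\E\sup_t|u_n|_H^2+K\,\E\int_0^T\sum_k|L_kR_nu_n|_H^2\,\d s$. By \eqref{eq:boundLVtoH} and \eqref{eq:galerkinbdd2}, the latter integral is at most $KTC_1\E|u_0|_V^2$. Absorbing the first half, which is finite since $u_n\in L^2(\Omega;C_TV)$, gives \eqref{eq:galerkinbdd3} with $C_2$ depending only on $K_C$, $T$ and $K$.
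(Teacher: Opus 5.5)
Your proposal is correct and follows the paper's proof step by step. The paper uses the same three steps: Itô's formula for $|u_n|_H^2$ with the symmetry of $R_n$ and \eqref{eq:dissipGext}; Itô's formula for $|C^{1/2}u_n|_H^2$ with \eqref{eq:condenergy} and Gronwall; then Burkholder--Davis--Gundy, Cauchy--Schwarz, Young and \eqref{eq:boundLVtoH}, absorbing $\frac12\E\sup_t|u_n(t)|_H^2$. Your extra remarks only make explicit what the paper leaves implicit: the $H$-drift is $-\sum_{k>n}|L_kR_nu_n|_H^2\le 0$ rather than zero, \eqref{eq:condenergy} is read at $x=R_nu_n\in D(C^{3/2})$ via duality, and $C_2$ also depends on the constant in \eqref{eq:boundLVtoH}.
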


The first two estimates are well-known with this set of assumptions, whereas \eqref{eq:galerkinbdd3} is new for this equation (but well-known for a large class of SPDE). For the sake of clarity, we briefly present their proofs.

\begin{proof}

Estimate \eqref{eq:galerkinbdd1} comes from an application of Ito's formula to $(|u_n(t)|^2)_{t \leq T}$, the symmetry of $R_n$, \eqref{eq:dissipGext} and by finally taking the expectation.

For estimate \eqref{eq:galerkinbdd2}, we start by applying $C^{1/2}$ to the system \eqref{eq:defgalerkin}. Then, by Ito's formula applied to the functional $(| C^{1/2} u_n(t) |_H^2)_{t \leq T}$
\begin{eqnarray*}
\d | C^{1/2} u_n(t) |_H^2 &=& \left( 2 ( C^{1/2}u_n(t), C^{1/2} R_n G R_n u_n(t) )_H + \sum_{k=1}^n |C^{1/2}  L_k R_n u_n(t)|_H^2\right) \d t \\ 
&&+ \sum_{k=1}^n (C^{1/2}u_n(t), C^{1/2}P_n L_k u_n(t) )_H \d B^k(t).
\end{eqnarray*}
Therefore, by \eqref{eq:condenergy}, \eqref{eq:galerkinbdd1} and taking the expectation, it holds for all $t \leq T$
\begin{equation*}
    \E |u_n(t)|_V^2 \leq \E |u_0|_V^2 + K_C\int_0^t \E |u_n(s)|_V^2 \d s.
\end{equation*}
Estimate \eqref{eq:galerkinbdd2} follows directly by Gronwall's lemma.

For estimate \eqref{eq:galerkinbdd3}, first notice that by applying Burkholder-Davis-Gundy inegality \cite{prevot2007concise} for stochastic integral, there exists $c >0$ such that
\begin{equation*}
    \E \left[\sup_{0 \leq t \leq T} \big|\sum_{k=1}^n \int_0^t (u_n(s), L_k^n u_n(s) )_H \d W_s^k\big|\right] \leq c \E \left[ \left(\sum_{k=1}^n \int_0^{T}(u_n(s), L_k^n u_n(s) )^2_H \d s\right)^{1/2}\right].
\end{equation*}
Applying first Cauchy-Schwarz inequality on the right hand side, then Young's inequality and finally  \eqref{eq:boundLVtoH} it implies that
\begin{equation}
    \label{eq:BDGgal}
    \begin{aligned}
    & \E\left[\sup_{0\leq t\leq T}|u_n(t)|_H \left(\sum_{k=1}^n \int_0^T | L_k^n u_n(s) |^2_H \d s\right)^{1/2}\right] \\
&\leq \frac{1}{2c}\E\left[\sup_{0\leq t\leq T}|u_n(t)|^2_H\right] + \frac{c K_C}{2} \E\left[\int_0^T |R_n u_n(s) |_V^2 \d s\right],
    \end{aligned}
\end{equation}

By taking the expectation of the supremum in time in the application of Ito's formula to $(|u_n(t)|^2)_{t \leq T}$ and combining with \eqref{eq:BDGgal} it yields
\begin{equation*}
    \E\left[\sup_{0\leq t\leq T}|u_n(t)|_H^2\right] \leq \E [|u_0|_H^2] + \frac{1}{2}\E\left[\sup_{0\leq t\leq T}|u_n(t)|_H^2\right] + \frac{c^2 K_C}{2} \E\left[\int_0^{T} |u_n(s) |_V^2 \d s\right].
\end{equation*}
Therefore, estimate \eqref{eq:galerkinbdd3} follows \eqref{eq:galerkinbdd2}.

\end{proof}

\subsection{Passage to the limit}
\label{subsec:passlim}

With the previous estimates in hand, we are now ready to prove Theorem \ref{thm:exglobalsse}. We will exactly follow the final steps of the proof of \cite[Theorem 2.13.]{pardoux2021stochastic}. In the following, we will extract a sub-sequence three times by the Banach-Alaoglu theorem (which we will not distinguish from the initial sequence).

First, we observe that the approximations of the operators are adequate. More precisely, the approximations converge in the following, for all $v \in L^2( \Omega, L^2_T V)$
\begin{equation*}
    \lim_{n \to + \infty} \E \int_0^T (v(s), G^n v(s))_H \d s = \E \int_0^T (v(s), G v(s))_H \d s,
\end{equation*}
and
\begin{equation*}
    \lim_{n \to + \infty} \E \int_0^T | L^n v(s)|_{l^2(H)}^2 \d s = \E \int_0^T | L v(s)|_{l^2(H)}^2 \d s.
\end{equation*}
Second, Lemma \ref{lem:estgalerkin} asserts that the sequence $(u_n)_{n \in \N}$ is bounded in $L^2(\Omega; L^{\infty}_{T}H\cap L^2_{T}V)$, hence there exists $u$ in $L^2(\Omega, L^{\infty}_{T}H\cap L^2_{T}V)$ such that $(u_n)_{n \in \N}$ converges weak-star toward $u$ in this space. Third, there exists $\xi \in L^2(\Omega, L^2_{T}V^*)$ weak limit of $(G^n u_n)_{n \in \N}$ since $G \in \Lc(V,V^*)$, $(G u_n)_{n\in \N}$ is bounded in $L^2(\Omega \x [0,T]; V^*)$ by \eqref{eq:galerkinbdd2}. Fourth, combining \eqref{eq:boundLVtoH} and \eqref{eq:galerkinbdd2} yield that $(L^n u_n)_{n \in \N}$ is bounded in $L^2(\Omega \x [0,T]; l^2(H))$. Therefore, there exists a weak limit $\eta$ in $L^2(\Omega \x (0,T); l^2(H))$ to $(L^n u_n)_{n \in \N}$. Combining everything and taking the limit in \eqref{eq:defgalerkin}, we deduce that for all $t \in [0,T]$
\begin{equation}
    \label{eq:liminter}
    u(t) = u(0) + \int_0^t \xi(s) \d s + \int_0^t\sum_{k=1}^{\infty} \eta_k(s) \d W^k(s).
\end{equation}
Moreover, by lower semi-continuity, there exists a positive constant $C(G, K_C, T)$ such that
\begin{equation}
    \label{eq:lscestimlim}
    \E \left[|u|_{L^{\infty}_{T}H}^2 + |u|^2_{L^2_{T}V} \right] \leq C \E |u_0|^2_V.
\end{equation}
Finally by evaluating the scheme at time $T$, we found that $(u_n(T))_{n \in \N}$ is bounded in $L^2(\Omega, H)$. Therefore, another extraction can be performed such that $(u_n(T))_{n \in \N}$ converges weakly in $L^2(\Omega, H)$.

To conclude the proof, we need to first prove that $u$ has continuous random paths and second identify the limiting elements $\xi$ and $(\eta_k)_{k \geq 1}$.

For the first part, by \eqref{eq:liminter} and Proposition \ref{lem:itoformulanorm} for $\P$-almost every $\omega \in \Omega$, $u(\cdot, \omega)$ is continuous. Hence, $u$ lies in $L^2(\Omega, C_{T}H\cap L^2_{T}V)$ by \eqref{eq:lscestimlim}.

For the identification, we start by noticing a monotonicity bound, i.e. we have for all $x, y \in V$ 
\begin{equation*}
    2 \langle x - y, G x - G y \rangle_{V, V^*} + \sum_{k=1}^{\infty}|L_k x - L_k y|^2_H \leq 0,
\end{equation*}
by \eqref{eq:dissipGext} and the linearity of the operators. From this bound and the various convergences obtained, we can identify the limiting elements i.e. $\xi = G u$ and $\eta = (L_k u)_{k \in \N}$. The proof of this result is similar to the one of \cite[Lemma 2.18.]{pardoux2021stochastic}, we recalled the main elements of the proof in Appendix \ref{app:limit} for the sake of clarity.

Finally, we need to prove the pathwise uniqueness of the solution. For that, we will first prove the mean conservation of the $H$-norm of the solution.

\begin{proposition}
    \label{prop:meanpreserv}
    Fix $T >0$, a stochastic basis $(\Omega, \Fc, (\Fc_t)_{t \geq 0}, \P)$ and a sequence of independent Brownian motions $(W^k)_{k\geq1}$. Let $u_0 \in L^2(\Omega,V)$ a $\Fc_0$-measurable random variable. Assume that there exists a solution $u \in L^2(\Omega, C([0,T],H) \cap L^2(0,T;V))$ with initial condition $u_0$. Then, for all $t \leq T$
    \begin{equation}
        \label{eq:conservationl2omeganorm}
        \E |u(t)|^2_H = \E |u_0|^2_H.
    \end{equation}
\end{proposition}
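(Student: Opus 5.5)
We apply the Itô formula for the square of the $H$-norm in the variational (Gelfand triple $V \subset H \subset V^*$) setting, Proposition \ref{lem:itoformulanorm} from the Appendix. The solution satisfies
\begin{equation*}
u(t) = u_0 + \int_0^t G u(s)\,\d s + \sum_{k=1}^{\infty}\int_0^t L_k u(s)\,\d W^k(s),
\end{equation*}
with $u \in L^2(\Omega, L^2_T V)$. Hence $Gu \in L^2(\Omega, L^2_T V^*)$ because $G \in \Lc(V,V^*)$, and $(L_k u)_k \in L^2(\Omega, L^2_T l^2(H))$ by \eqref{eq:boundLVtoH}. The Itô formula therefore gives, $\P$-almost surely and for all $t \le T$,
\begin{equation*}
|u(t)|_H^2 = |u_0|_H^2 + \int_0^t \Big( 2\langle u(s), G u(s)\rangle_{V,V^*} + \sum_{k=1}^\infty |L_k u(s)|_H^2 \Big)\d s + M(t),
\end{equation*}
where $M(t) = 2\sum_k \int_0^t (u(s), L_k u(s))_H\,\d W^k(s)$.

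By the dissipation identity \eqref{eq:dissipGext}, the integrand of the $\d s$ term vanishes identically for $u(s) \in V$, which holds for almost every $s$. Thus $|u(t)|_H^2 = |u_0|_H^2 + M(t)$ almost surely.

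It remains to show that $M$ is a true martingale with zero expectation, not merely a local martingale. Its quadratic variation satisfies
\begin{equation*}
\langle M\rangle_T = 4\sum_k\int_0^T (u,L_k u)_H^2\,\d s \le 4\,|u|_{C_TH}^2 \int_0^T |Lu|_{l^2(H)}^2\,\d s .
\end{equation*}
By the Burkholder--Davis--Gundy inequality, then Cauchy--Schwarz/Young and \eqref{eq:boundLVtoH},
\begin{equation*}
\E\sup_{t\le T}|M(t)| \le c\,\E\big[|u|_{C_TH}\,|Lu|_{L^2_T l^2(H)}\big] \le \frac{c}{2}\,\E|u|^2_{C_TH} + \frac{cK}{2}\,\E|u|^2_{L^2_TV} < \infty,
\end{equation*}
since $u \in L^2(\Omega, C_TH \cap L^2_TV)$. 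A continuous local martingale whose running supremum is integrable is a true martingale, so $\E M(t) = 0$. Taking expectations then yields \eqref{eq:conservationl2omeganorm}.

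The main obstacle is justifying the Itô formula in the variational setting, since $Gu$ lives only in $V^*$. This is handled by Proposition \ref{lem:itoformulanorm} under the integrability just checked. The martingale step is routine by the BDG argument above; if needed, one can alternatively localize with $\tau_N = \inf\{t : |u(t)|_H > N\}$ and pass to the limit using dominated convergence with the bound $\sup_t |u(t)|_H^2 \in L^1(\Omega)$.
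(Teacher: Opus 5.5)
Your proof is correct and follows the paper's route: Itô's formula for the squared $H$-norm (Proposition~\ref{lem:itoformulanorm}), cancellation of the drift by \eqref{eq:dissipGext}, and taking expectations. Your BDG step fills in a point the paper leaves implicit: the available integrability bounds $\E\langle M\rangle_T^{1/2}$, which makes the stochastic integral a true martingale with zero mean.
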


\begin{proof}
 Since $u$ is a solution to \eqref{eq:SSE} lying in $ L^2(\Omega, C([0,T],H) \cap L^2(0,T;V))$, by Ito's formula to $(|u(t)|^2_H)_{t \geq 0}$, given by Proposition \ref{lem:itoformulanorm}, we infer that almost surely for all $t \leq T$
\begin{equation*}
    |u(t)|^2_H = |u_0|^2_H + 2\int_0^t \langle u(s), G u(s) \rangle_{V, V^*} \d s + 2 \int_0^t (u(s), \d M_s)_H + \langle M \rangle_t,
\end{equation*}
where $M$ is the stochastic integral part of \eqref{eq:SSE}. The result follows by taking the expectation and applying \eqref{eq:dissipGext}.
\end{proof}

\begin{corollary}
\label{cor:uniciteSSE}
Let $u^1$ and $u^2$ be two solutions to \eqref{eq:SSE} in $L^2(\Omega, C([0,T],H) \cap L^2(0,T,V))$ with initial condition $u_0$. Then $u^1$ and $u^2$ are indistinguishable.
\end{corollary}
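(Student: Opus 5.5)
By linearity of \eqref{eq:SSE}, the difference $w := u^1 - u^2$ is itself a solution of \eqref{eq:SSE}. It lies in $L^2(\Omega, C([0,T],H) \cap L^2(0,T;V))$ and has initial condition $w(0) = 0$. Applying Proposition \ref{prop:meanpreserv} to $w$ with $u_0 = 0$ gives
\begin{equation*}
    \E |w(t)|_H^2 = \E |w(0)|_H^2 = 0 \quad \text{for every } t \in [0,T].
\end{equation*}
Hence, for each fixed $t$, $w(t) = 0$ almost surely.

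To upgrade this to indistinguishability, I take a countable dense set of times, say $\mathbb{Q} \cap [0,T]$ together with $T$. A countable union of null sets is null, so there is a single full-measure set $\Omega_0$ on which $w(q,\omega) = 0$ for all such rational $q$. On the other hand, the trajectories of $u^1$ and $u^2$ are almost surely in $C([0,T],H)$. After shrinking $\Omega_0$ by another null set, $t \mapsto w(t,\omega)$ is therefore continuous in $H$ for every $\omega \in \Omega_0$. A continuous function vanishing on a dense set vanishes identically, so $\sup_{t \leq T} |w(t)|_H = 0$ on $\Omega_0$. This is exactly $\P(|u^1 - u^2|_{C_TH} = 0) = 1$.

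The only point needing care is that Proposition \ref{prop:meanpreserv} genuinely applies to $w$. This requires the It\^o formula of Proposition \ref{lem:itoformulanorm}, which in turn needs the drift $Gw$ to belong to $L^2(\Omega \times (0,T); V^*)$ and the noise coefficients $(L_k w)_k$ to belong to $L^2(\Omega \times (0,T); l^2(H))$. Both hold: the first follows from $G \in \Lc(V,V^*)$, the second from \eqref{eq:boundLVtoH}, and in each case because $w \in L^2(\Omega, L^2_T V)$. The stochastic integral $\int_0^t (w, \d M_s)_H$ is then a true martingale with zero expectation, so taking expectations is legitimate. This is the same justification already used in the proof of Proposition \ref{prop:meanpreserv}, so I expect no real obstacle here.
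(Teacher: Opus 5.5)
Your argument is correct and is the one the paper intends: the corollary is stated without proof right after Proposition~\ref{prop:meanpreserv}, which is introduced so that, by linearity, it can be applied to $u^1-u^2$ (a solution with zero initial datum) to get $\E|u^1(t)-u^2(t)|_H^2=0$ for each $t$, after which continuity of paths in $H$ gives indistinguishability. One small correction to your last paragraph: the bound on $(L_k w)_k$ in $L^2(\Omega\times(0,T);l^2(H))$ alone does not make $\int_0^t (w(s),\d M_s)_H$ a true martingale; you also need $w\in L^2(\Omega, C_TH)$, used via Burkholder--Davis--Gundy to make the supremum of the integral integrable, though since you invoke Proposition~\ref{prop:meanpreserv} as a black box this does not affect your proof.
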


\section{Proof of Theorem \ref{thm:uniqueness}}
\label{sec:proofuniq}
This section is devoted to the proof of Theorem \ref{thm:uniqueness}. The main tool is a pathwise version of Gronwall's lemma for stochastic processes. To prove the theorem, we will first apply this lemma to the square $H$-norm of the difference with a good choice of stopping times and then we will take the expectation to conclude the localization argument.

\begin{lemma}[Pathwise Gronwall's lemma]
\label{lem:pathwisegronw}
Let $T>0$ and $(\Omega, \Fc, (\Fc)_{t \leq T}, \P)$ be a stochastic basis. Let $\tau$ be a stopping time bounded by $T$, $\phi$ be a non-negative almost-surely integrable process and $M$ be a continuous semi-martingale. Assume that $b$ is a stochastic process with almost surely continuous random paths such that, almost surely for all $t \leq \tau$
\begin{equation}
    \label{eq:condgronw}
    b(t) \leq \int_0^t b(s) \phi(s) \d s + M(t).
\end{equation}
Then, almost surely for all $t \leq T$
\begin{equation}
    \label{eq:pathwgronw}
    e^{-A(t \wedge \tau)}b(t\wedge \tau) \leq M(0) + \int_0^{t \wedge \tau} e^{-A(s)} \d M(s),
\end{equation}
where $A(t) = \int_0^t \phi(s) \d s$.
\end{lemma}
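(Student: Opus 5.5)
The plan is to mimic the deterministic proof of Gronwall's lemma pathwise, using the integrating factor $e^{-A(t)}$ and the integration-by-parts formula for continuous semimartingales. Fix $\omega$ in the full-measure set where $\phi(\cdot,\omega)$ is integrable on $[0,T]$, $b(\cdot,\omega)$ is continuous, $M(\cdot,\omega)$ is a continuous path, and \eqref{eq:condgronw} holds for all $t\le\tau(\omega)$. Then $A$ is absolutely continuous, nondecreasing and adapted, so $e^{-A}$ is a continuous adapted process of finite variation. Define
\begin{equation*}
    R(t) := \int_0^t b(s)\phi(s)\,\d s, \qquad t \leq T,
\end{equation*}
which is absolutely continuous with $R'(t) = b(t)\phi(t)$ for almost every $t$ (indeed $b\phi$ is integrable since $b$ is continuous, hence bounded, on $[0,T]$). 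The hypothesis reads $b \le R + M$ on $[0,\tau]$.

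\textbf{Step 1 (differential inequality for $R$).} For almost every $s\le\tau$, using $\phi\ge 0$,
\begin{equation*}
    \frac{\d}{\d s}\left(e^{-A(s)}R(s)\right) = e^{-A(s)}\phi(s)\big(b(s) - R(s)\big) \leq e^{-A(s)}\phi(s) M(s).
\end{equation*}
Integrating from $0$ to $t\wedge\tau$ and using $R(0)=0$ gives
\begin{equation*}
    e^{-A(t\wedge\tau)}R(t\wedge\tau) \le \int_0^{t\wedge\tau} e^{-A(s)}\phi(s)M(s)\,\d s.
\end{equation*}

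\textbf{Step 2 (rewriting the right-hand side with Itô's product rule).} Since $e^{-A}$ has finite variation, $\d(e^{-A}M) = e^{-A}\,\d M - e^{-A}\phi M\,\d t$, with no covariation term. Hence
\begin{equation*}
    \int_0^{t\wedge\tau} e^{-A}\phi M\,\d s = M(0) - e^{-A(t\wedge\tau)}M(t\wedge\tau) + \int_0^{t\wedge\tau} e^{-A(s)}\,\d M(s).
\end{equation*}
The stochastic integral is well defined because $e^{-A}$ is continuous, adapted and bounded by $1$, and stopping at $\tau$ is legitimate. This identity holds almost surely simultaneously for all $t$, as both sides are continuous in $t$.

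\textbf{Step 3 (conclusion).} Multiply the hypothesis at time $t\wedge\tau$ by $e^{-A(t\wedge\tau)}>0$:
\begin{equation*}
    e^{-A(t\wedge\tau)}b(t\wedge\tau) \le e^{-A(t\wedge\tau)}R(t\wedge\tau) + e^{-A(t\wedge\tau)}M(t\wedge\tau).
\end{equation*}
Then insert Step 1 and Step 2. The terms $\pm e^{-A(t\wedge\tau)}M(t\wedge\tau)$ cancel, leaving exactly \eqref{eq:pathwgronw}.

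\textbf{Main obstacle.} The only delicate point is Step 2: the pathwise inequality of Step 1 must be combined with a stochastic integral, which is defined only up to a null set. This is handled by working with a fixed continuous version of $\int_0^{\cdot} e^{-A}\,\d M$. The product rule then holds almost surely for all $t$ at once, and the remaining manipulations are purely deterministic on each path in that full-measure set. A secondary check is the measurability and adaptedness of $A$, which follows from $\phi$ being a non-negative integrable process.
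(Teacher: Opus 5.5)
Your proposal is correct and follows essentially the paper's argument. The paper applies the deterministic Gronwall lemma pathwise to get $e^{-A(t)}b(t)\le \lambda(t)+\int_0^t\lambda(s)\,\d A(s)$ with $\lambda=e^{-A}M$, which is exactly what your integrating-factor computation in Step 1 produces, and then applies It\^o's formula to $\lambda$ as in your Step 2; the only difference is that you prove the Gronwall step inline (correctly noting that no sign condition on $b$ is needed), which makes the argument self-contained.
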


This lemma is an intermediate version of the usual stochastic Gronwall's lemma \cite{scheutzow2013stochastic}.

\begin{proof}
In the proof, we will work with the non-decreasing of almost surely finite variation process $A$ defined by $A(t) = \int_0^t \phi(s) \d s$. First, define $\Omega_0 \subset \Omega$ the measurable set such that $A$ and $\tau$ are finite and $M$ and $b$ lie in $C([0,T];\R)$. It is clear that $\Omega_0$ is a set of $\P$-measure $1$.

Now, fix $\omega \in \Omega_0$. From \eqref{eq:condgronw}, since $A(\omega)$ has finite variation on $[0, \tau]$, an application of Gronwall's lemma holds that for all $t \leq \tau$
\begin{equation*}
    b(t) \leq M(t) + \int_0^t e^{A(t) - A(s)} M(s) \d A(s).
\end{equation*}
Define the following intermediate process, for all $t \leq T$
\begin{equation*}
    \lambda(t) := e^{-A(t)} M(t).
\end{equation*}
Hence, for all $t \leq \tau$
\begin{equation}
    \label{eq:lemgroninterm}
    e^{-A(t)} b(t) \leq \lambda (t) + \int_0^t \lambda(s) \d A(s).
\end{equation}
Thus, by a direct application of Ito's formula to $(\lambda(t))_{t \geq0}$, for all $t \leq \tau$ it holds that
\begin{equation*}
    e^{-A(t)}b(t) \leq M(0) + \int_0^{t} e^{-A(s)} \d M(s).
\end{equation*}
Therefore, the result follows since the previous inequality is pathwise.
\end{proof}

We are now ready to prove Theorem \ref{thm:uniqueness}.

\begin{proof}[Proof of Theorem \ref{thm:uniqueness}]
Let $u$ and $v$ be two solutions of equation \eqref{eq:SPDEunique} on the same stochastic basis $(\Omega, \Fc, (\Fc_t)_{t\geq 0}, \P)$ with the same initial condition. For $n \in \N$, define the following stopping times
\begin{equation*}
    \tau_n := \inf\{ t \geq 0, |u(t)|_H \vee |v(t)|_H \vee\int_0^t \eta (u(s), v(s)) +  \sigma (u(s)) + \sigma (v(s))\d s \geq n\} \wedge T.
\end{equation*}
From the almost sure continuity of the trajectories and \eqref{eq:finiteconduniqueness}, it is clear that almost surely $\tau_n$ goes to $T$ as $n$ goes to infinity. Define the following intermediate process
\begin{equation*}
    b(t) := |u(t) - v(t)|^2_H,
\end{equation*}
which is continuous almost surely, since both $u$ and $v$ are in $C([0,T];H)$ almost surely.

By an application of Ito's formula to the functional $(b(t))_{t \leq T}$, it yields
\begin{equation}
    \label{eq:itouniq}
    \begin{aligned}
    \d b(t \wedge \tau_n) &= 2 \langle u(t\wedge \tau_n) - v(t\wedge \tau_n), G u(t \wedge \tau_n) - G v(t \wedge \tau_n) \rangle_{V, V^*} \d t \\
    &+ 2 ( u(t\wedge \tau_n) - v(t\wedge \tau_n), F(u(t\wedge \tau_n)) - F(v(t\wedge \tau_n)))_H \d t\\
    &+ |J(u(t\wedge \tau_n)) - J(v(t\wedge \tau_n))|^2_{l^2(H)} \d t + \d M^n(t)
    \end{aligned}
\end{equation}
where $M^n$ is given by
\begin{equation*}
    \d M^n(t) := 2 \sum_{l=1}^{\infty} (u(t\wedge \tau_n) - v(t\wedge \tau_n), J_l(u(t\wedge \tau_n)) - J_l(v(t\wedge \tau_n)))_H \d W^l(t\wedge \tau_n),
\end{equation*}
with $M^n(0) = 0$. One can easily show that with the definition of $\tau_n$ and \eqref{eq:boundconduniqueness}
\begin{equation*}
    \E |M^n(T)|^2 < + \infty,
\end{equation*}
and thus it is a continuous martingale since it is a square integrable stochastic integral. Moreover, by applying \eqref{eq:growthconduniq} in \eqref{eq:itouniq}, we have that for all $t \leq \tau_n$
\begin{equation*}
    b(t) \leq \int_0^t b(s) \eta(u(s),v(s)) \d s + M^n(t).
\end{equation*}
Then, we can define
\begin{equation*}
    \phi(t) := \eta(u(t),v(t)),
\end{equation*}
which clearly is a non-negative process and almost-surely intergable by \eqref{eq:finiteconduniqueness}. Therefore, by applying Lemma \ref{lem:pathwisegronw}, it holds for all $t \leq T$
\begin{equation}
    \label{eq:appligron1}
    b(t \wedge \tau_n) e^{- \int_0^{t \wedge \tau_n} \eta(u(s),v(s)) \d s} \leq  \int_0^{t \wedge \tau_n} e^{-A(s)} \d M^n(s).
\end{equation}
For the right hand side of \eqref{eq:appligron1}, by Ito's isometry and Cauchy-Schwarz inequality it holds
\begin{equation}
    \E \left( \int_0^{t \wedge \tau_n} e^{-A(s)} \d M^n(s)\right)^2 \leq K n \E \int_0^{t \wedge \tau_n} \sigma(u(s)) + \sigma(v(s)) \d s \leq K n^2.
\end{equation}
This implies that the stochastic integral is, in fact, a square integral martingale starting from $0$. Therefore, we can take the expectation in \eqref{eq:appligron1}, for all $t \leq T$
\begin{equation*}
    e^{-n} \E [b(t\wedge \tau_n)] \leq \E \left[ e^{- \int_0^{t \wedge \tau_n} \eta(u(s),v(s))\d s}b(t\wedge \tau_n)  \right] \leq 0,
\end{equation*}
where we used the definition if the stopping time. Hence, it yields for all $n \geq 1$ and for all $t \leq T$
\begin{equation*}
    \E |u(t\wedge \tau_n) - v(t\wedge \tau_n)|^2_H = 0.
\end{equation*}
Then, by letting $n$ goes to infinity, we can conclude since $u$ and $v$ lie in $C([0,T]; H)$ almost surely.
\end{proof}

\section{Proof of Theorem \ref{thm:exglobalSNLS}}
\label{sec:proofsnls}

This section is devoted to the proof of Theorem \ref{thm:exglobalSNLS}. In subsection \ref{subsec:uniqsnls} we prove the pathwise uniqueness of \eqref{eq:SNLS} with the help of Theorem \ref{thm:uniqueness}. Then, we prove the existence of solutions with the right regularity in subsection \ref{subsec:existence}.

\subsection{Uniqueness}
\label{subsec:uniqsnls}

To prove the pathwise uniqueness for equation \eqref{eq:SNLS}, we will use Theorem \ref{thm:uniqueness}. According to the form of the equation given in the theorem, we define the following intermediate functions, for all $X \in V$
\begin{equation}
    F(X) := - \Frac{1}{2}\sum_{k=1}^{\infty} \left( \langle L_k \rangle_X^2 - 2 \langle L_k \rangle_X L_k \right)X,
\end{equation}
and for $k \geq 1$
\begin{equation}
    J_k(X) := \left( L_k - \langle L_k \rangle_X \right) X.
\end{equation}
Assume that there exist solutions such that, almost surely
\begin{equation*}
    X(\cdot, \omega) \in C([0,T];H) \cap L^2(0,T;V).
\end{equation*}
To apply the theorem, we therefore need to prove that conditions \eqref{eq:boundconduniqueness} and \eqref{eq:growthconduniq} hold, with the two following functions $\eta, \sigma$
\begin{equation}
    \eta(X, Y) := K( 1 + |X|^2_H + |Y|^2_H)(|X|^2_V + |Y|^2_V) \text{ and } \sigma(X) := K( 1 + |X|^4_H)|X|^2_V.
\end{equation}
According to the regularity assumptions on the solution, it is clear that those functions satisfy \eqref{eq:finiteconduniqueness}.

Before starting the computations, we give two useful results. By definition of the average value \eqref{eq:defaverage} and Cauchy-Schwarz inequality, notice that for an operator $A$ (potentially unbounded) and all $X \in D(A)$
\begin{equation}
    \label{eq:CSaverage}
    |\langle A \rangle_X| \leq |X|_H |A X|_H.
\end{equation}
Moreover, since $H$ is seen as a real Hilbert space, if $B$ is n anti-symmetric operator then for all $X$ such that the quadratic form is well-defined, it holds
\begin{equation}
    \label{eq:skewadjprop}
    (X, B X)_H = 0.
\end{equation}

First, we prove \eqref{eq:boundconduniqueness} by separating the study of the two intermediate functions. First, notice that by \eqref{eq:CSaverage}, for $k \geq 1$
\begin{equation*}
    \begin{aligned}
    |\left( \langle L_k \rangle_X^2 - 2 \langle L_k \rangle_X L_k \right)X| &\leq |\langle L_k \rangle_X|^2|X|_H + 2|\langle L_k \rangle_X| |L_k X|_H\\
    &\leq |X|_H(1+ 2 |X|_H^2) |L_k X|^2_H.
    \end{aligned}
\end{equation*}
Hence, by \eqref{eq:boundLVtoH}
\begin{equation}
    \label{eq:estimFsnls}
    |F(X)|_H \leq K |X|_H(1+ |X|_H^2) |X|^2_V.
\end{equation}

For the second part, notice similarly that
\begin{equation*}
    |J_k(X)|^2_H \leq 2 (|X|^4_H + 1) |L_k X|^2_H.
\end{equation*}
Thus, once again by \eqref{eq:boundLVtoH}
\begin{equation}
    \label{eq:estimJsnls}
    |J(X)|^2_{l^2(H)} \leq K (|X|^4_H + 1) |X|^2_V.
\end{equation}
Therefore, combining \eqref{eq:estimFsnls} and \eqref{eq:estimJsnls}, it yields
\begin{equation}
    |F(X)|_H + |J(X)|^2_{l^2(H)} \leq K (1 + |X|_H^4)|X|^2_V = \sigma(X).
\end{equation}

Second, we need to prove \eqref{eq:growthconduniq} which is the most involved one. The idea of the proof starts from the cancellations first noticed in \cite{gatarek1991continuous} when $L$ is self-adjoint. However, with our construction, we recall that the main difficulty is the absence of almost-sure continuity in $V$. For $k \geq 1$, we recall the following decomposition of $L_k$ with a symmetric operator $A_k$ and an anti-symmetric one $B_k$
\begin{equation*}
    L_k = A_k + B_k.
\end{equation*}
It follows easily from the expression of $A_k$ and $B_k$ (given by \eqref{eq:defdecAB}) and \eqref{eq:boundLVtoH} that for all $X \in V$
\begin{equation}
    \label{eq:boundAVtoH}
    |A X|^2_{l^2(H)} \leq K |X|^2_V,
\end{equation}
and similarly
\begin{equation}
    \label{eq:boundBVtoH}
    |B X|^2_{l^2(H)} \leq K |X|^2_V.
\end{equation}
Moreover, notice that, by linearity with respect to $L$ of the average and \eqref{eq:skewadjprop}
\begin{equation*}
    \langle L_k \rangle_X = \langle A_k \rangle_X.
\end{equation*}
We are now ready to prove \eqref{eq:growthconduniq}. We start by expanding each terms of the left hand side of \eqref{eq:growthconduniq} with the decompositions of $(L_k)_k$. Four types of terms will then appear: the Hamiltonian term, terms composed solely of A, terms composed solely of B and cross terms. In the following, we take $X, Y$ elements of $V$.

First, the expansion for the effective Hamiltonian, using \eqref{eq:defG} is as follows
\begin{equation*}
    \begin{aligned}
    2\langle X-Y, G X - G Y\rangle_{V, V^*} &= 2\langle X-Y, - \i H (X - Y)\rangle_{V, V^*} -  \sum_{k=1}^{\infty}\langle X-Y, A_k^2( X - Y) \rangle_{V, V^*} \\
    &- \sum_{k=1}^{\infty}\langle X-Y, B_k^* B_k (X - Y)\rangle_{V, V^*}\\
    &+ \sum_{k=1}^{\infty}\langle X-Y, [A_k,B_k] (X - Y)\rangle_{V, V^*}\\
    &= I + II_1 + III_1 + IV_1.
    \end{aligned}
\end{equation*}
Second, for the non-linear term of the drift part, we obtain
\begin{equation*}
    \begin{aligned}
    2 (X-Y,F(X) - F(Y))_{H} &= - \sum_{k=1}^{\infty} (X-Y,  \langle A_k \rangle_X^2 X -  \langle A_k \rangle_Y^2  Y )_H \\
    &+ \sum_{k=1}^{\infty} (X-Y,  2\langle A_k \rangle_X A_k X - 2  \langle A_k \rangle_Y A_k Y )_H \\
    &+ \sum_{k=1}^{\infty} (X-Y, 2 \langle A_k \rangle_X B_k X - 2 \langle A_k \rangle_Y B_k Y)_H \\
    &= II_2 + II_3 + IV_2.
    \end{aligned}
\end{equation*}
And third, for the quadratic variation part, we find
\begin{equation*}
    \begin{aligned}
    |J(X) - J(Y)|_{l^2(H)}^2 &= \sum_{k=1}^{\infty}|A_k(X-Y) - (\langle A_k \rangle_X X - \langle A_k \rangle_Y Y)|^2_H  +\sum_{k=1}^{\infty} |B_k(X-Y)|^2_H\\
    &+2 \sum_{k=1}^{\infty} (B_k(X-Y), A_k(X-Y) - (\langle A_k \rangle_X X - \langle A_k \rangle_Y Y))_H\\
    &= II_4 + III_2 + IV_3.
    \end{aligned}
\end{equation*}
We combine the terms in the following way, as describe before
\begin{eqnarray}
II &=& II_1 + II_2 + II_3 + II_4\\
III &=& III_1 + III_2\\
IV &=& IV_1 + IV_2 + IV_3.
\end{eqnarray}
Therefore, the left hand side of \eqref{eq:growthconduniq} becomes
\begin{equation}
    \begin{aligned}
    2\langle X-Y, G X - G Y\rangle_{V, V^*} + 2 (X-Y,F(X) - F(Y))_{H} + |J(X) - J(Y)|_{l^2(H)}^2 \\
    = I + II + III + IV. 
    \end{aligned}
\end{equation}

The term $I$ is clearly $0$.

The term $II$ is the one treated similarly as in \cite{gatarek1991continuous} since it only involves symmetric operators. After expanding each terms and some careful algebraic manipulations, we obtain for all $k \geq 1$
\begin{equation*}
    \begin{aligned}
    &- (X-Y, (A_k - \langle A_k \rangle_X)^2 X - (A_k - \langle A_k \rangle_Y)^2 Y )_H + |(A_k - \langle A_k \rangle_X)X - (A_k - \langle A_k \rangle_Y)Y|^2_H \\
    &= (X, Y)_H (\langle A_k \rangle_X - \langle A_k \rangle_Y)^2,
    \end{aligned}
\end{equation*}
where the details of the calculations can be found in Appendix \ref{app:techcalc}. Then, notice that by symmetry of $A_k$
\begin{equation}
    \label{eq:devdiffaverage}
    \begin{aligned}
    \langle A_k \rangle_X - \langle A_k \rangle_Y &= (X, A_k X) + (X, A_k Y) - (X, A_k Y) - (Y, A_k Y)\\
    &= (X-Y, A_k X + A_k Y)_H.
    \end{aligned}
\end{equation}
Therefore,by \eqref{eq:devdiffaverage}, \eqref{eq:CSaverage} and \eqref{eq:boundAVtoH}, it holds
\begin{equation}
    \label{eq:estimII}
    II \leq C |X|_H |Y|_H \left( |X|_V^2 + |Y|_V^2 \right) |X-Y|^2_H.
\end{equation}

For $III$, first notice that for all $k \geq 1$
\begin{equation*}
    -\langle X, B_k^* B_k X \rangle_{V, V^*} = - |B_k X|^2_H.
\end{equation*}
This easily implies that $III$ is equal to zero.

Finally, to tackle the term $IV$, the linear terms in $X$ and $Y$ are separated from the nonlinear terms using the following decomposition
\begin{equation}
    \label{eq:decompoIV}
    \begin{aligned}
    IV &= \sum_{k=1}^{\infty} -\langle X-Y, \left(A_k B_k + B_k A_k\right) X-Y\rangle_{V, V^*}\\
    &+ \sum_{k=1}^{\infty} 4(X-Y, \langle A_k \rangle_X B_k X -  \langle A_k \rangle_Y B_k Y)_H\\
    &= i + ii.
    \end{aligned}
\end{equation}
For $i$, notice that the operator in the bracket is anti-symmetric, this directly implies that $i$ is zero. The term $ii$ requires some tricks. First, notice that by anti-symmetry of $B$
\begin{equation}
    \label{eq:estiitrick1}
    4 (X-Y, \langle A_k \rangle_X B_k X -  \langle A_k \rangle_Y B_k Y)_H = 4 (X, B_k Y)_H \left(  \langle A_k \rangle_X -  \langle A_k \rangle_Y\right).
\end{equation}
Then, once again by the anti-symmetry of $B$ and \eqref{eq:skewadjprop}, it follows that
\begin{equation}
    \label{eq:estiitrick2}
    2 (X, B_k Y) = (X-Y, B_k X + B_k Y)_H.
\end{equation}
Hence, by substituting \eqref{eq:estiitrick1}, then \eqref{eq:devdiffaverage} and \eqref{eq:estiitrick2}, into the expression for $ii$, it holds
\begin{equation}
    ii \leq K \sum_{k=1}^{\infty} |B_k (X+Y)|_H |A_k(X+Y)|_H |X-Y|^2_H.
\end{equation}
Finally, by Cauchy-Schwarz inequality, \eqref{eq:boundAVtoH}, \eqref{eq:boundBVtoH} and \eqref{eq:decompoIV}, we obtain
\begin{equation}
    \label{eq:estimIV}
    IV \leq K (|X|^2_V + |Y|^2_V)|X-Y|^2_H.
\end{equation}
Therefore, combining \eqref{eq:estimII} and \eqref{eq:estimIV} with the cancellation of $I$ and $III$, it follows that
\begin{equation*}
    \begin{aligned}
    2&\langle X-Y, G X - G Y\rangle_{V, V^*} + 2 (X-Y,F(X) - F(Y))_{H} + |J(X) - J(Y)|_{l^2(H)}^2 \\
    &\leq K (1 + |X|_H^2 + |Y|_H^2) (|X|^2_V + |Y|^2_V) |X-Y|^2_H = \eta(X, Y) |X-Y|^2_H,
    \end{aligned}
\end{equation*}
which is exactly \eqref{eq:growthconduniq}. Hence, all the conditions of Theorem \ref{thm:uniqueness} are satisfied.

Therefore, by an application of Theorem \ref{thm:uniqueness}, if $u$ and $v$ are two solutions of \eqref{eq:SNLS} with the same initial condition and paths in $C([0,T];H) \cap L^2(0,T;V)$ then $u$ and $v$ are indistinguishable. Hence, pathwise uniqueness holds in $C([0,T];H) \cap L^2(0,T;V)$.

\subsection{Existence}
\label{subsec:existence}

To prove the existence of strong solutions, we will first prove the existence of solution in another stochastic basis, i.e. weak solutions in the probabilistic sense, in the same way as in \cite{mora08nlss}. Then, we will use the pathwise uniqueness and Yamada-Watanabe theorem to prove the existence of solutions in the same stochastic basis i.e. strong solutions in the probabilistic sense.

Fix $T>0$, a stochastic basis $(\Omega, \Fc, (\Fc_t)_t, \P)$ and $(B^k)_{k \in \N}$ a sequence of $(\Fc_t)_{t \geq 0}$-Brownian motions. Fix also $u_0 \in L^2(\Omega, V)$ with almost-surely unit norm in $H$. By Theorem \ref{thm:exglobalsse}, there exists a unique $u$ strong $C^{1/2}$-solution of \eqref{eq:SSE}. 

Similarly as in \cite{mora08nlss}, we aim to prove that there exists a solution for \eqref{eq:SNLS} on the probability space $(\Omega, \Fc, (\Fc_t)_{t \leq T}, \Q)$, where $\Q$ is defined by its Radon-Nikodym derivative $\d \Q := |u(T)|^2_H \d \P$. In the following, we will denote by $\E_{\P}$ (respectively $\E_{\Q}$) the expectation with respect to $\P$ (respectively $\Q$). Since $\E|u_0|^2_H$ is unitary, by Proposition \ref{prop:meanpreserv}, $\Q$ is a probability measure. Before constructing the solution, we need to prove the martingale property of the non-negative process $(|u(t)|^2_H)_{t \leq T}$.

\begin{proposition}
 Let $(u(t))_{t \leq T}$ be a strong $C^{1/2}$-solution of equation \eqref{eq:SSE} with initial condition $u_0 \in L^2(\Omega, V)$ a $\Fc_0$-measurable random variable. Then, the process $(| u(t) |^2)_{t \leq T}$ is a continuous martingale with respect to the filtration $(\Fc_t)_{t\leq T}$.
\end{proposition}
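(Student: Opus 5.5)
We prove that $(|u(t)|_H^2)_{t\leq T}$ is a continuous martingale by showing that it is a continuous local martingale, via It\^o's formula, and then upgrading it to a true martingale with a BDG/dominated convergence argument.

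\emph{Step 1: It\^o's formula.} Since $u$ lies in $L^2(\Omega, C([0,T],H)\cap L^2(0,T;V))$ and solves \eqref{eq:SSE}, Proposition \ref{lem:itoformulanorm} gives, almost surely for all $t\leq T$,
\begin{equation*}
|u(t)|_H^2 = |u_0|_H^2 + \int_0^t \Big( 2\langle u(s), G u(s)\rangle_{V,V^*} + \sum_{k=1}^\infty |L_k u(s)|_H^2 \Big)\d s + 2\sum_{k=1}^\infty \int_0^t (u(s), L_k u(s))_H \d B^k(s).
\end{equation*}
By \eqref{eq:dissipGext} the drift vanishes identically. Hence $|u(t)|_H^2 = |u_0|_H^2 + N(t)$, where
\begin{equation*}
N(t) := 2\sum_{k}\int_0^t (u, L_k u)_H \d B^k.
\end{equation*}
This already shows that $|u(t)|^2_H$ is continuous, and $N$ is a continuous local martingale. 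Its integrand is well defined because, by Cauchy--Schwarz and \eqref{eq:boundLVtoH},
\begin{equation*}
\sum_k (u,L_k u)_H^2 \leq K |u|_H^2 |u|_V^2,
\end{equation*}
which is almost surely integrable in time. We also note that $|u_0|_H^2$ is $\Fc_0$-measurable and integrable.

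\emph{Step 2: true martingale property (the main obstacle).} The bound on the integrand does not directly give $\E\langle N\rangle_T<\infty$, since that would require a fourth moment. I would instead use the BDG inequality with exponent $1$:
\begin{equation*}
\E \sup_{t\leq T}|N(t)| \leq c\,\E\Big(\int_0^T \sum_k (u,L_k u)_H^2\d s\Big)^{1/2} \leq c\sqrt{K}\,\E\Big[|u|_{C_TH}\,|u|_{L^2_TV}\Big].
\end{equation*}
By Young's inequality and \eqref{eq:estimatemomentssolSSE}, the right-hand side is bounded by a constant times $\E|u_0|_V^2<\infty$. A continuous local martingale $N$ with $\E\sup_{t\le T}|N(t)|<\infty$ is a true martingale. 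Indeed, if $(\sigma_m)$ is a localizing sequence, then each $N(t\wedge\sigma_m)$ is a martingale and is dominated by $\sup_{s\le T}|N(s)|\in L^1$, so dominated convergence passes the martingale property $\E[N(t\wedge\sigma_m)\mid\Fc_s]=N(s\wedge\sigma_m)$ to the limit $m\to\infty$.

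\emph{Step 3: conclusion.} It follows that $|u(t)|^2_H=|u_0|^2_H+N(t)$ is an integrable, adapted, continuous martingale with respect to $(\Fc_t)_{t\leq T}$. As a by-product, taking expectations recovers Proposition \ref{prop:meanpreserv}. The only delicate point is the localization argument of Step 2, which the $L^1$ BDG estimate handles using only the second-moment bound \eqref{eq:estimatemomentssolSSE}.
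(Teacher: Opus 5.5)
Your proof is correct and follows the paper's route: It\^o's formula together with \eqref{eq:dissipGext} shows that $|u(t)|_H^2$ is a continuous local martingale, and domination by an integrable random variable then upgrades it to a true martingale. The only difference is how you get the integrable bound: you use the $L^1$ BDG inequality, while the paper uses directly that $|u|^2_{C_TH}\in L^1(\Omega)$. Since $\sup_{t\le T}|N(t)|\le 2|u|^2_{C_TH}$, your BDG detour is valid but unnecessary.
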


\begin{proof}
We will prove this result in two steps: first we will show that it is a local martingale and then use some properties of local martingales to conclude.

Let us define the sequence of stopping times $(\sigma_n)_{n \in \N}$ by, for all $n \in \N$:
\begin{equation}
    \sigma_n = \inf \{ t>0, \sum_{k=1}^{\infty} \int_0^t (u(s), L_k u(s) )_H^2\d s \geq n\} \wedge T.
\end{equation}
Then, an application of Ito's formula to the process $(| u(t) |^2)_{t \leq T}$ gives for $t \in [0,T]$
\begin{equation}
    \label{eq:itonormHu}
    |u(t \wedge \sigma_n)|^2 = |u_0|^2 + \sum_{k=1}^{\infty} \int_0^{t \wedge \sigma_n} 2 (u(s), L_k u(s) )_H \d B^k_s,
\end{equation}
where we used \eqref{eq:dissipGext}. Thus, since $u$ lies in $L^2(\Omega, C_TH \cap L^2_TV)$ and by \eqref{eq:boundLVtoH} it follows that the squared $H$-norm of $u$ is indeed a local martingale.

Finally, since $|u|^2_{C_TH}$ is an element of $L^1(\Omega)$, it implies that $(|u(t)|^2)_{t \leq T}$ is a real local martingale dominated by an integrable random variable. Thus it is a true martingale.
\end{proof}

Hence, since $(| u(t) |^2)_{t \leq T}$ is a continuous non-negative martingale, we obtain
\begin{equation*}
    \Q( |u(t)|^2_H > 0, \space \forall t \in [0,T]) = \Q(|u(T)|^2_H > 0) = 1.
\end{equation*}
We can then define the process $X$ on a set of $\Q$-measure $1$, for $t \in [0,T]$ :
\begin{equation}
    X(t, \omega) := \left\{ \begin{array}{rcl}
         \frac{u(t,\omega)}{|u(t, \omega)|_H} & \mbox{if}
         & \omega \in \{|u(T)|_H > 0\} \\ 0  & \mbox{if} & \omega \in \{ |u(T)|_H = 0\}.
                \end{array}\right.  
\end{equation}
From the regularity of the process $u$, it follows that $\Q$-a.s the process $X$ lies in $C_TH$ and is of unit $H$-norm for each $t \in [0,T]$. Moreover, by definition of $\Q$, we obtain
\begin{equation*}
    \E_{\Q}|X(t)|^2_V = \E_{\P}\left[\Frac{|u(t)|^2_V}{|u(t)|^2_H}|u(T)|^2_H \right] = \E_{\P}|u(t)|^2_V,
\end{equation*}
since $(| u(t) |^2)_{t \leq T}$ is a martingale. Hence, since $u \in L^2(\Omega, L^2_TV, \P)$, it yields that $X $ is in $L^2( \Omega, L^2_T V, \Q)$.

Moreover, the martingale property allows us to apply Girsanov's theorem for absolutely continuous change of measures (as stated in \cite{lenglart1977transformation}). By Lévy's Characterization of Brownian Motion and thanks to \eqref{eq:boundLVtoH} (as in \cite{mora08nlss}), under the measure $\Q$ defined by $\d \Q = |u(T)|^2 \d \P$, the sequence of processes defined by
\begin{equation}
    \label{eq:defBkGir}
    W^k(\cdot) = B^k(\cdot) - \int_0^{\cdot} 2\frac{1}{|u(s)|^2_H} (u(s), L_k u(s) )_H \d s,
\end{equation}
is a sequence of independent $(\Fc_t)_{t \leq T}$-adapted Brownian motions. Let us define the following process
\begin{equation}
    M(\cdot) := \int_0^{\cdot}2 \sum_{k=1}^{\infty}(X(s) , L_k X(s) )_H \d W^k(s).
\end{equation}
Since $X$ lies in $L^2( \Omega, L^2_TV, \Q)$ and \eqref{eq:boundLVtoH} holds, it follows that $M$ is a continuous square integrable martingale under $\Q$. Moreover, by \eqref{eq:itonormHu} and \eqref{eq:defBkGir}
\begin{equation}
    \d |u(t)|^2_H = |u(t)|^2_H \d \left(M(t) + \langle M \rangle(t) \right).
\end{equation}
Therefore, $(|u(t)|^2_H)_{t \leq T}$ is a Doléans-Dade exponential. It implies that the process $(|u(t)|^{-1}_H)_{t \leq T}$ is well defined, as a power of a Doléans-Dade exponential, and has the following Ito representation
\begin{equation}
    \d |u(t)|^{-1}_H = |u(t)|^{-1}_H \d \left(-\Frac{1}{2}M(t) - \Frac{1}{8} \langle M \rangle(t) \right).
\end{equation}
Finally, since $u$ is a solution of \eqref{eq:SSE}, we can apply the Ito formula to the process $X$, as a product of two continuous semi-martingales, which is exactly \eqref{eq:SNLS}. Therefore $X$ is a weak solution of \eqref{eq:SNLS} in the probability space $(\Omega, \Fc, (\Fc_t)_{t \geq 0}, \Q)$ with the the sequence of independent Brownian motions $(W^k)_{k \geq 1}$ adapted to the filtration $(\Fc_t)_{t \geq 0}$.

Finally, by pathwise uniqueness and continuity almost surely in $H$, we can apply an infinite dimensional version of Yamada-Watanabe theorem \cite{fahim2025yamada, theewis2025yamada}. To specify in the case of \cite{theewis2025yamada}, we apply Theorem $3.1$ with the conditions of the variational framework given in Example $2.6$. Therefore, equation \eqref{eq:SNLS} has a unique probabilistic strong solution, which concludes the proof of Theorem \ref{thm:exglobalSNLS}.

\begin{appendix}

\section{Some probabilistic reminders}
\label{app:protools}

We recall some elementary properties on the stochastic integral in a Hilbert space $H$. For an in depth study, we refer to \cite{da2014stochastic, prevot2007concise}. We start by fixing $T>0$, a stochastic basis $(\Omega, \Fc, (\Fc_t)_{t \geq 0}, \P)$ and a sequence of independent Brownian motions $(W^k)_{k \geq 1}$ adapted to the filtration $(\Fc_t)_{t \geq 0}$.

\begin{theorem}
Let $\Phi \in L^2(\Omega, L^2(0,T; l^2(H))$, then the stochastic process $\Phi \cdot W$ defined by, for all $t \leq T$
\begin{equation*}
    \Phi \cdot W(t) := \sum_{k=1}^{\infty}\int_0^t \Phi^k(s) \d W^k(s),
\end{equation*}
is a continuous square integrable $H$-valued martingale. Moreover, the Ito isometry is satisfied, i.e for all $t \leq T$
\begin{equation*}
    \E |\Phi \cdot W(t)|^2_H = \int_0^t \E |\Phi(s)|^2_{l^2(H)} \d s.
\end{equation*}
\end{theorem}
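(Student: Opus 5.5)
We want the standard statement for the stochastic integral on $l^2(H)$-valued integrands: $\Phi\cdot W$ is a continuous square integrable $H$-valued martingale, and the Itô isometry holds. The plan is to build $\Phi\cdot W$ as an $L^2$ limit of finite sums of scalar stochastic integrals. Uniform convergence of those sums then gives continuity, and the isometry passes to the limit.

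\textbf{Step 1 (elementary integrands).} For a simple adapted process $\Phi^k(s)=\sum_{j}\1_{(t_j,t_{j+1}]}(s)\,\phi^k_j$, where $\phi^k_j$ is $\Fc_{t_j}$-measurable and in $L^2(\Omega,H)$, and only finitely many $k$ are nonzero, we set
\[
\Phi\cdot W(t)=\sum_{k}\sum_j \phi^k_j\bigl(W^k(t_{j+1}\wedge t)-W^k(t_j\wedge t)\bigr).
\]
This process is continuous and adapted, and the martingale property follows from conditioning on $\Fc_{t_j}$. For the isometry we expand $\E|\cdot|_H^2$ and look at the cross terms. Pairs with $j\neq j'$ vanish: condition on the later $\Fc_{t_{j'}}$ and use that the increment of $W^{k'}$ after $t_{j'}$ is independent of it with mean zero. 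Pairs with $j=j'$ and $k\neq k'$ vanish because $W^k$ and $W^{k'}$ are independent with independent increments. What remains is $\sum_{k,j}\E|\phi^k_j|_H^2\,(t_{j+1}\wedge t-t_j\wedge t)=\int_0^t\E|\Phi(s)|^2_{l^2(H)}\d s$.

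\textbf{Step 2 (density and extension).} Simple processes of this kind are dense in the closed subspace of progressively measurable elements of $L^2(\Omega,L^2(0,T;l^2(H)))$; the integrand is implicitly taken progressive. To see this, first truncate to $k\le N$. Then project onto a finite-dimensional subspace of $H$, so that each scalar component can be handled. Finally apply the classical real-valued density of step processes, for instance by time-averaging over a mesh. The isometry of Step 1 makes $\Phi\mapsto\Phi\cdot W$ a linear isometry into $L^2(\Omega,H)$ for each $t$, so it extends to the closure.

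\textbf{Step 3 (continuity and martingale property).} Take simple $\Phi_n\to\Phi$ with $\sum_n\|\Phi_n-\Phi\|_{L^2(\Omega\times(0,T);l^2(H))}<\infty$. The differences $\Phi_n\cdot W-\Phi_m\cdot W$ are continuous $H$-valued martingales, so $|\cdot|_H$ of each is a real submartingale. Doob's maximal inequality together with the isometry gives
\[
\E\sup_{t\le T}|\Phi_n\cdot W(t)-\Phi_m\cdot W(t)|_H^2\le 4\int_0^T\E|\Phi_n-\Phi_m|^2_{l^2(H)}\d s .
\]
Hence the sequence is Cauchy in $L^2(\Omega,C_TH)$, and a subsequence converges almost surely uniformly. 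The limit is continuous, agrees with the $L^2$ extension from Step 2, and is a martingale, since conditional expectations pass to $L^2$ limits. The isometry also passes to the limit. The main obstacle is the density claim of Step 2 in the infinite-dimensional $l^2(H)$ setting. I would resolve it by the three reductions listed there: truncation in $k$, finite-dimensional projection in $H$, then the scalar approximation. Alternatively one may simply cite \cite{da2014stochastic, prevot2007concise}, viewing $(W^k)$ as a cylindrical Wiener process on $l^2$ and $\Phi$ as Hilbert--Schmidt from $l^2$ to $H$, whose norm is exactly $|\Phi|_{l^2(H)}$.
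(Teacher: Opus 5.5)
Your proposal is correct and follows the paper's route. The paper gives no proof of its own and only recalls the result as standard from \cite{da2014stochastic, prevot2007concise}, where it is obtained by exactly your argument: isometry on elementary integrands, extension by density, and Doob's maximal inequality for continuity. Your closing identification of $(W^k)$ with a cylindrical Wiener process on $l^2$, and of $l^2(H)$ with the Hilbert--Schmidt operators $l^2\to H$, is precisely how that citation applies.

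One justification is worth tightening. The $j=j'$, $k\neq k'$ cross terms vanish because $\E\bigl[(W^k(t_{j+1})-W^k(t_j))(W^{k'}(t_{j+1})-W^{k'}(t_j))\mid\Fc_{t_j}\bigr]=0$. This needs the increments to be jointly independent of $\Fc_{t_j}$, which holds here (for instance via L\'evy's characterization) and is built into the cylindrical setting; mutual independence of the $W^k$ alone does not give it.
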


We also recall, an application of Ito's formula for the squared norm for a large class of stochastic process.

\begin{proposition}[Ito's formula for the squared norm]
    \label{lem:itoformulanorm}
     Let $u_0 \in H$, $u$ and $v$ two adapted processes with trajectories in $L^2(0,T;V)$ and $L^2(0,T,V^*)$ respectively, an adapted process $\phi$ lies in $L^1(0,T;H)$ almost surely and $X$ be a continuous $H$-valued martingale, such that:
 \begin{equation*}
     u(t) = u_0 + \int_0^t v(s) \d s + X_t + \int_0^t \phi(s) \d s
 \end{equation*}
 Then $u \in C([0,T], H)$ a.s. Moreover almost surely,  for all $t \leq T$
 \begin{equation*}
     |u(t)|^2_H = |u_0|^2_H + 2 \int_0^t \langle u(s), v(s) \rangle_{V,V^*} \d s + 2 \int_0^t (u(s),  \phi(s) \d s + \d X(s) )_H + \langle X \rangle_t. 
 \end{equation*}
\end{proposition}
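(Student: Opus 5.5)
The plan is to approximate the $V$-valued drift by a strongly convergent $H$-valued one, apply the classical Itô formula, and pass to the limit pathwise (the Krylov–Rozovskii / Pardoux approach).

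\textbf{Setup.} Set $Y(t) := u(t) - \int_0^t v(s)\,\d s$, a continuous $H$-valued semimartingale. Recall the Gelfand triple $V \subset H \subset V^*$ from Assumption \ref{a:hyp2}, with $C \geq \varepsilon$, and the Yosida operators $R_n = n(n+C)^{-1}$ from \eqref{eq:defyosida}. These satisfy $\|R_n\|_{\Lc(H)} \leq 1$ and $\|R_n\|_{\Lc(V)} \leq 1$, map $V^*$ into $V$, and converge strongly to the identity on $H$, on $V$ and on $V^*$.

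\textbf{Step 1: regularize.} Define $u_n := R_n u$. Then
\[
u_n(t) = R_n u_0 + \int_0^t R_n v(s)\,\d s + \int_0^t R_n\phi(s)\,\d s + R_n X_t .
\]
Here $R_n v \in L^2(0,T;H)$ pathwise, so $u_n$ is a genuine continuous $H$-valued semimartingale. The classical Itô formula for $|\cdot|_H^2$ therefore applies and gives
\[
|u_n(t)|_H^2 = |R_n u_0|_H^2 + 2\int_0^t (R_n u, R_n v)_H\,\d s + 2\int_0^t (R_n u, R_n\phi\,\d s + R_n\,\d X)_H + \langle R_n X\rangle_t .
\]
Using the symmetry of $R_n$, rewrite $(R_n u, R_n v)_H = \langle R_n^2 u, v\rangle_{V,V^*}$.

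\textbf{Step 2: pass to the limit $n\to\infty$ for fixed $t$.} Each term is handled pathwise.
\begin{itemize}
\item The drift term converges by dominated convergence, using $|R_n^2 u|_V \leq |u|_V$ and $\int_0^T |u|_V\,|v|_{V^*}\,\d s < \infty$.
\item The $\phi$-term converges in the same way, as soon as $\sup_t |u(t)|_H < \infty$ is known.
\item The bracket $\langle R_n X\rangle_t$ tends to $\langle X\rangle_t$, e.g.\ in probability via trace-class convergence of the quadratic covariation.
\item The stochastic integrals converge in probability (uniformly in $t$) by the BDG inequality along a localizing sequence.
\end{itemize}
This requires a priori boundedness of $u$ in $H$. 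One could argue $u \in L^\infty(0,T;H)$ via weak continuity in $V^*$ together with an energy bound, but this is circular.

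\textbf{Main obstacle: continuity of $u$ in $H$ and uniformity in $t$.} I would follow Krylov–Rozovskii. Apply Step 1 to differences $u_n - u_m$: the same Itô identity expresses $|u_n(t)-u_m(t)|_H^2$ through the terms above for $(R_n - R_m)$. The drift term is bounded by
\[
\int_0^T |(R_n-R_m)u|_V\,|(R_n-R_m)v|_{V^*}\,\d s \to 0 .
\]
The stochastic terms are controlled by BDG after localizing with stopping times $\tau_N$ (bounding $\int_0^t |u|_V^2$, $\int_0^t |v|_{V^*}^2$, $\int_0^t |\phi|_H$ and $\langle X\rangle$), together with the estimate $|(u_n-u_m,\phi)| \leq 2\sup|u_n - u_m|\,|\phi|_H$ absorbed Gronwall-style. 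This yields $\sup_t |u_n(t)-u_m(t)|_H \to 0$ in probability, so $u_n$ is Cauchy in $C([0,T],H)$ along a subsequence almost surely. The limit coincides with $u$ (since $u_n \to u$ in $V^*$ pointwise), so $u \in C([0,T],H)$ a.s. The identity then passes to the limit uniformly in $t$.

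The delicate point is closing the sup bound without already knowing $\sup_t |u|_H < \infty$. I would first bound $\sup_t |u_n|_H$ uniformly in $n$ using the same identity with the localization, then remove the localization as $N \to \infty$.
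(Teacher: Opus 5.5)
\textbf{Verdict.} Your proposal is correct up to three small repairs listed below. It takes a different route from the paper only in that the paper gives no argument of its own.

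\textbf{Comparison with the paper.} The paper cites Pardoux's Lemma 2.14 for the case $\phi=0$ and asserts that adding a continuous finite-variation term ``does not change anything.'' You instead carry out the regularization argument in full: Yosida smoothing by $R_n$, the classical It\^o formula for $R_n u$, a Cauchy estimate in $C([0,T];H)$ under localization, and then passage to the limit term by term. This is essentially the proof behind the cited lemma. What your route buys is an explicit treatment of $\phi$, which the paper's remark glosses over. The term $\phi$ cannot be merged into $v$, since $L^1(0,T;H)\not\subset L^2(0,T;V^*)$. Nor can the lemma be applied to $u-\int_0^\cdot\phi\,\d s$, since that process need not lie in $L^2(0,T;V)$. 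Moreover, the term $\int_0^t (u,\phi)_H\,\d s$ requires exactly the $\sup_t|u(t)|_H$ control that is being proved. The paper's route is shorter but relies on the reader re-running the cited proof with this extra term.

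\textbf{Repairs.}

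(a) In the Cauchy estimate, the $\phi$-term must keep the factor $(R_n-R_m)\phi$. Use
\[
|((R_n-R_m)u,(R_n-R_m)\phi)_H|\le |u_n-u_m|_H\,|(R_n-R_m)\phi|_H,
\]
then Young's inequality, with $\int_0^T|(R_n-R_m)\phi|_H\,\d s\to0$ by dominated convergence. Your bound by $2\sup|u_n-u_m|_H\,|\phi|_H$ does not produce a vanishing contribution, Gronwall-style or otherwise. With the remaining terms of size $\delta$, Bihari's inequality only yields $\sup_t|u_n-u_m|_H\lesssim\sqrt{\delta}+2\int_0^T|\phi|_H\,\d s$.

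(b) The ``delicate point'' is easier than you suggest. For fixed $n,m$, the process $u_n-u_m$ is continuous in $H$, either by Step 1 or because $R_n\in\Lc(V^*,H)$ and $u\in C([0,T];V^*)$ from the equation. Add $\inf\{t:|u_n(t)-u_m(t)|_H\ge K\}$ to your stopping times, so that the expected suprema are finite. Absorb the $\sup$ terms coming from (a) and from BDG, obtain a bound independent of $K$, and let $K\to\infty$ by monotone convergence. No uniform-in-$n$ bound is needed. The identification of the limit with $u$ via convergence in $V^*$ then goes through as you wrote.

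(c) The proposition is stated, and used in Theorem \ref{thm:uniqueness}, for an arbitrary dense pair $V\subset H$. So you should not take $R_n$ from the operator $C$ of Assumption \ref{a:hyp2}. Instead take $R_n=n(n+\Lambda)^{-1}$, where $\Lambda$ is the positive self-adjoint operator on $H$ associated with the closed form $(\cdot,\cdot)_V$. Then $V=D(\Lambda^{1/2})$ with equivalent norms, and $R_n$ has all the properties you use: it is a contraction on $H$, $V$ and $V^*$, it maps $V^*$ into $V$, and it converges strongly to the identity.
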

In \cite[Lemma 2.14]{pardoux2021stochastic}, the proof is given when $\phi = 0$. However, in the proof, adding a continuous process with finite variation does not change anything, since it does not hinder the application of Ito's formula.

\section{Identification of the limits of the scheme}
\label{app:limit}

This section is devoted to the identification of the limiting elements of the approximation scheme. Recall that $\xi$ is the weak limiting element of $(G^n u_n)_{n \in \N}$ in $L^2( \Omega, L^2_TV^*)$ and $\eta$ of $(L^nu_n)_{n \in \N}$ in $L^2(\Omega, L^2_Tl^2(H))$. Recall also that $u$ is the limiting element of $(u_n)_{n \in \N}$ in $L^2(\Omega, L^{\infty}_TH \cap L^2_T V)$ and satisfy \eqref{eq:liminter}. We will prove  the following identities

\begin{equation}
    \label{eq:identification}
    \xi = G u \text{ and } (\eta_k)_{k\in \N} = (L_k u)_{k\in \N}. 
\end{equation}

The proof is exactly as the proof of \cite[Lemma 2.18.]{pardoux2021stochastic}, the main arguments of which we will reproduce below for the sake of clarity.

The first part of the proof consists in showing the following inequality, for all element $v$ in $L^2( \Omega \x (0,T), V)$
\begin{equation}
    \label{eq:monotnicityweaklim}
    2 \E \int_0^T \langle u(s) - v(s), \xi(s) - G v(s) \rangle_{V, V^*} \d s + \E \int_0^T |\eta(s) - L v(s)|^2_{l^2(H)} \d s \leq 0.
\end{equation}
Second, equipped with this inequality we will identify first $(\eta_k)_{k \geq 1}$ and then the drift term $\xi$.

To prove \eqref{eq:monotnicityweaklim}, we start by proving 
\begin{equation}
    \label{eq:dissiplimweak}
    \begin{aligned}
    \E &\int_0^T 2\langle u(s), \xi(s) \rangle_{V, V^*} + |\eta(s)|^2_{l^2(H)} \d s \\
    &\leq \Liminf_{n \to +\infty}  \E \int_0^T 2\langle u_n(s), G^n u_n(s) \rangle_{V, V^*} + |L^n u_n(s)|^2_{l^2(H)} \d s.
    \end{aligned}
\end{equation}
By an application of Ito's formula to $(|u(t)|^2_H)_{t \geq 0}$, the squared norm of the limiting element $u$ and taking the expectation, it holds that
\begin{equation}
    \label{eq:itolimweak}
    \E |u(T)|^2_H - \E |u_0|^2_H = 2 \E \int_0^T \langle u(s), \xi(s)\rangle_{V, V^*} \d s + \E \int_0^T |\eta(s)|^2_{l^2(H)} \d s.
\end{equation}
since $u$ satisfies \eqref{eq:liminter}. 

Moreover, since we have chose the sub-sequence such that $(u_n(T))_{n \geq 1}$ is weakly convergent in $L^2(\Omega, H)$, it is known that
\begin{equation}
    \label{eq:liminfenT}
    \E |u(T)|^2_H \leq \Liminf_{n \to +\infty} \E |u_n(T)|^2_H.
\end{equation}
Therefore \eqref{eq:dissiplimweak} holds by combining \eqref{eq:liminfenT}, \eqref{eq:itolimweak} and the expression of $(|u_n(t)|^2_H)_{t \geq 0}$ given by Ito's formula. 

Now, notice that by \eqref{eq:dissipGext}, for all $v \in L^2( \Omega \x (0,T), V)$  it holds
\begin{equation}
    \label{eq:signscheme}
    2 \E \int_0^T \langle u_n(s) - v(s), G^n u_n(s) - G^n v(s) \rangle_{V, V^*} \d s + \E \int_0^T |L^n u_n(s) - L^n v(s)|^2_{l^2(H)} \d s \leq 0.
\end{equation}
Then, the monotonicity condition \eqref{eq:monotnicityweaklim} follows from \eqref{eq:signscheme}, \eqref{eq:dissiplimweak}, operator convergences and the various weak convergences precised at the beginning in Subsection \ref{subsec:passlim}.

We are now equipped to identify the limiting elements. First, we evaluate \eqref{eq:monotnicityweaklim} for $v = u$, which implies
\begin{equation*}
    \E \int_0^T |\eta(s) - L u(s)|^2_{l^2(H)} \d s = 0,
\end{equation*}
and thus $\eta = L u$. To conclude, we will use the Minty trick \cite[Lemma 1.]{zbMATH03200902} to identify $\xi$. More precisely, we take $w \in L^2(\Omega \x (0,T); V)$, $\theta >0$ and we evaluate \eqref{eq:monotnicityweaklim} for $v = u - \theta w$. We obtain
\begin{equation*}
    \E \int_0^T \langle w(s), \xi(s) - G u(s) \rangle_{V, V^*} \d s + \theta \E \int_0^T \langle w(s), G w(s) \rangle_{V, V^*} \d s \leq 0.
\end{equation*}
Therefore, taking $\theta$ to $0$ allows us to conclude that $\xi = G u$, as required.

\section{Some technical computations}
\label{app:techcalc}

In this section, we will present some of the heavy computations done in Subsection \ref{subsec:uniqsnls}. The aim is to show that
\begin{equation*}
    \begin{aligned}
    &- (X-Y, (A_k - \langle A_k \rangle_X)^2 X - (A_k - \langle A_k \rangle_Y)^2 Y )_H + |(A_k - \langle A_k \rangle_X)X - (A_k - \langle A_k \rangle_Y)Y|^2_H \\
    &= (X, Y)_H (\langle A_k \rangle_X - \langle A_k \rangle_Y)^2.
    \end{aligned}
\end{equation*}
First, let start by defining the following intermediate operator
\begin{equation}
    \label{eq:defalphak}
    \alpha_k(X) := A_k - \langle A_k \rangle_X I_d.
\end{equation}
Note that this operator is symmetric. Then, by expanding each terms, we find that
\begin{equation}
\label{eq:techcompinter1}
    |\alpha_k(X)X - \alpha_k(Y)Y|^2_H = |\alpha_k(X)X|^2_H + |\alpha_k(Y)Y|^2_H - 2(\alpha_k(X)X, \alpha_k(Y)Y)_H.
\end{equation}
Moreover
\begin{equation}
    \label{eq:techcompinter2}
    \begin{aligned}
    &(X-Y, (\alpha_k(X))^2 X - (\alpha_k(Y))^2 Y )_H \\
    &= (X, \alpha_k(X)^2X) + (Y, \alpha_k(Y)^2Y) - (Y, \alpha_k(X)^2X)_H - (X, \alpha_k(Y)^2Y)_H\\
    &= |\alpha_k(X)X|^2_H + |\alpha_k(Y)Y|^2_H - (Y, \alpha_k(X)^2X)_H - (X, \alpha_k(Y)^2Y)_H,
    \end{aligned}
\end{equation}
where we used the symmetry of $\alpha_k(X)$. Therefore, substracting \eqref{eq:techcompinter2} to \eqref{eq:techcompinter1}, one can notice the following factorisation
\begin{equation*}
    |\alpha_k(X)X - \alpha_k(Y)Y|^2_H - (X-Y, (\alpha_k(X))^2 X - (\alpha_k(Y))^2 Y )_H = (X, (\alpha_k(X) - \alpha_k(Y))^2 Y)_H.
\end{equation*}
Then, by \eqref{eq:defalphak}, it yields
\begin{equation*}
    \alpha_k(X) - \alpha_k(Y) = (\langle A_k \rangle_Y - \langle A_k \rangle_X) I_d.
\end{equation*}
Hence, for $k \geq 1$
\begin{equation*}
    |\alpha_k(X)X - \alpha_k(Y)Y|^2_H - (X-Y, (\alpha_k(X))^2 X - (\alpha_k(Y))^2 Y )_H = (X,Y)_H  (\langle A_k \rangle_Y - \langle A_k \rangle_X)^2,
\end{equation*}
which is the required expression.

\end{appendix}

\section*{Acknowledgments}
The author would like to thank his PhD advisors, Anne de Bouard and Gaoyue Guo, for their support and feedback on the manuscript.

\addcontentsline{toc}{section}{References}
\bibliography{biblio.bib}

\end{document}